\def \sfi {\mathrm{Mod}_{\gs}^{\varphi}}
\def \gs {\mathfrak S}
\def \hR {{\widehat{\mathcal R} }}
\def \hM {{\hat \M}}
\def \O {\mathcal O}
\def \t {\mathrm}
\newcommand{\wt}{\widetilde}
\newcommand{\M}{\mathfrak{M}}
\newcommand{\cM}{\mathcal{M}}
\newcommand{\cm}{\mathcal{M}}
\newcommand{\Zp}{\mathbb{Z}_p}
\newcommand{\Qp}{\mathbb{Q}_p}
\newcommand{\Z}{\mathbb{Z}}
\newcommand{\Q}{\mathbb{Q}}
\DeclareMathOperator{\Fil}{Fil}
\DeclareMathOperator{\Gal}{Gal}
\DeclareMathOperator{\GL}{GL}
\DeclareMathOperator{\Hom}{Hom}
\DeclareMathOperator{\Mat}{Mat}
\DeclareMathOperator{\Mod}{Mod}
\DeclareMathOperator{\Rep}{Rep}
\DeclareMathOperator{\rank}{rank}
\DeclareMathOperator{\tor}{tor}
\DeclareMathOperator{\fr}{fr}
\newcommand{\cris}{\mathrm{cris}}
\newcommand{\Frob}{\mathrm{Frob}}
\newcommand{\st}{\mathrm{st}}
\newcommand{\MF}{MF^{(\varphi, N)}}
\newcommand{\bigMF}{\mathcal{MF}^{(\varphi, N)}}
\newcommand{\Acris}{A_{\textnormal{cris}}}
\newcommand{\Asthat}{\widehat{A_{\textnormal{st}}}}
\newcommand{\bigM}{\mathcal{M}}
\newcommand{\bigD}{\mathcal{D}}
\newcommand{\D}{\mathcal{D}}
\newcommand{\huaS}{\mathfrak{S}}
\newcommand{\huaM}{\mathfrak{M}}
\newcommand{\huaN}{\mathfrak{N}}
\newtheorem{thm}{Theorem}[section]  
\newtheorem{theorem}[thm]{Theorem}
\newtheorem{prop}[thm]{Proposition}
\newtheorem{lemma}[thm]{Lemma}
\newtheorem{cor}[thm]{Corollary}
\theoremstyle{definition}
\newtheorem{defn}[thm]{Definition}
\newtheorem{remark}[thm]{Remark}
\numberwithin{equation}{subsection}
\begin{document}
\title[]{Fontaine-Laffaille modules and strongly divisible modules}
\date{\today} 
\author[]{HUI GAO}
\subjclass[2010]{Primary 11F80, 11F33}
\keywords{Fontaine-Laffaille modules, strongly divisible modules, crystalline representations}

\begin{abstract}
In this note, we study the relation between Fontaine-Laffaille modules and strongly divisible modules, \emph{without} assuming the main theorem of Fontaine-Laffaille (but we need to assume the main results concerning strongly divisible modules). This in particular gives a new proof for the main theorem of Fontaine-Laffaille (for $p>2$).

Dans cette note, nous \'{e}tudions la relation entre les modules de Fontaine-Laffaille et les modules fortement divisibles, \emph{sans} supposer le th\'{e}or\`{e}me principal de Fontaine-Laffaille (mais nous supposons les principaux r\'{e}sultats concernant les modules fortement divisibles). Cela donne en particulier une nouvelle preuve pour le th\'{e}or\`{e}me principal de Fontaine-Laffaille (pour $p> 2$).

\end{abstract}

\maketitle
 
\tableofcontents

\section{Introduction}
\subsection{Overview and main results}
This note concerns the relation between some categories in integral $p$-adic Hodge theory. In integral $p$-adic Hodge theory, we use various (semi-)linear objects to study $\Zp$-lattices in semi-stable Galois representations. Several theories have been proposed to classify $\Zp$-lattices in semi-stable representations, normally under certain restrictions (e.g., ramification of base field, Hodge-Tate weights, crystalline representations only, etc.). For example, we have Fontaine and Laffaille's theory \cite{FL82} of strongly divisible $W(k)$-lattices, Breuil's theory of strongly divisible $S$-lattices (see \cite{Bre02}),
the theory of Wach modules developed by Wach, Colmez and Berger (\cite{Wach96, Wach97, Col99, Ber04}), and T. Liu's theory of $(\varphi, \hat G)$-modules (\cite{Liu10}) using Kisin modules \cite{Kis06}.
Among all these theories, let us point out that Fontaine and Laffaille's theory is particularly simple, and it has generated many powerful applications, most notably, in Galois deformation theory and modularity lifting theorems (e.g., \cite{Ram93, BLGGT14}).

In order to better understand these various theories in integral $p$-adic Hodge theory, it is natural to study the relations between them. Let us list some of the studies:
\begin{enumerate}[leftmargin=*]
\item The relation between Wach modules and Fontaine-Laffaille modules was studied in \cite{Wach97} and \cite[\S V.2]{Ber04}. In particular, a ``direct" equivalence between the two categories is established in \cite[Thm. 3]{Wach97}.

\item The relation between $(\varphi, \hat G)$-modules and strongly divisible $S$-lattices is quite clear, via an explicit functor from Kisin modules to Breuil modules (see e.g., our Theorem \ref{thm kisbre}).

\item The relation between $(\varphi, \hat G)$-modules and Wach modules was studied in \cite[Thm. 1.0.2]{Liucompa}.

\item The relation between Fontaine-Laffaille modules and strongly divisible $S$-lattices (and $(\varphi, \hat G)$-modules) was studied in \cite[Prop. 4.1.2(6)]{Liucompa}. Note that in order to show the compatibility result as in \cite[Prop. 4.1.2(6)]{Liucompa}, it is necessary to assume \emph{all} the theories of Fontaine-Laffaille, Breuil, Kisin and Liu.
\end{enumerate}

Concerning item (4) listed above, there is actually a very obvious functor (denoted as $\underline{\cM}_S$ in our \S \ref{subsec FLtoS}) from Fontaine-Laffaille modules to strongly divisible $S$-lattices (already noted, e.g., in \cite[Example 2.2.2 (2)]{Bre02}). It is quite intriguing to see if one can actually ``directly" prove that the functor induces an equivalence of categories (without assuming the main theorem of Fontaine-Laffaille). This has proved to be a non-trivial task. In fact, as we will see in our paper, we will need to use some highly non-trivial results (\cite{GLS14}) on the shape of Kisin modules (and thus, shape of strongly divisible $S$-lattices), in order to establish the desired equivalence of categories. Note that we also need to assume the main theorem about  strongly divisible $S$-lattices (proved in \cite{Bre02, Liu08, Gao13}).

The relation we established thus gives a new proof of Fontaine-Laffaille theory. Clearly, our ``new" proof is in no way easier than the original proof. However, we would like to point out that our proof gives another approach to Fontaine-Laffaille theory using the more recent tools in integral $p$-adic Hodge theory. In particular, it sheds some new light on the intricate structures in Fontaine-Laffaille modules (as well as other integral theories).

Let us (very roughly) state our main result here (see \S \ref{subsec final} for more details).
\begin{thm} \label{thm intro}
Let $p>2$. There exists a functor $M_{\mathrm{FL}}$ (see \S \ref{defnFL}) from the category of strongly divisible $S$-lattices to the category of Fontaine-Laffaille modules, which is quasi-inverse to $\underline{\cM}_S$.
\end{thm}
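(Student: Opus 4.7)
The plan is to define $M_{\mathrm{FL}}(\cM) := \cM/u\cM$, equipped with the induced filtration $\Fil^i M := \mathrm{image}(\Fil^i \cM \to M)$ and with Frobenius-divided maps $\varphi_i : \Fil^i M \to M$ obtained by passing the $\varphi_i$ on $\Fil^i \cM$ through the quotient by $u\cM$. Since Fontaine-Laffaille modules correspond to crystalline representations, attention is restricted to strongly divisible lattices with trivial monodromy $N$. First I would verify the easy structural properties: that $M$ is a finite free $W(k)$-module, that the filtration is decreasing by direct summands in the correct range of Hodge-Tate weights, and that each $\varphi_i$ is well-defined and semilinear.

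The real obstacle is the strong divisibility condition $\sum_i \varphi_i(\Fil^i M) = M$, which does not follow formally from the analogous property of $\cM$: reducing modulo $u$ a priori loses information, since the sum may only equal $M$ after an $S$-linear (not $W(k)$-linear) span. To overcome this I would invoke the shape theorems for Kisin modules of \cite{GLS14}. Via Theorem \ref{thm kisbre}, the strongly divisible lattice $\cM$ arises from a Kisin module $\huaM$ over $\huaS$, and \cite{GLS14} provides an adapted basis of $\huaM$ in which the matrix of $\varphi$ has an explicit (essentially upper-triangular) form with diagonal entries controlled by powers of the Eisenstein polynomial. Transferring this basis to $\cM$ via the explicit Breuil-Kisin comparison yields concrete generators of the $\Fil^i \cM$ together with explicit formulas for $\varphi_i$, from which strong divisibility of the reduction $M$ becomes a direct matrix computation.

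For the quasi-inverse property, the composition $M_{\mathrm{FL}} \circ \underline{\cM}_S \cong \Id$ is immediate: by the construction of $\underline{\cM}_S$ in \S \ref{subsec FLtoS} one has $\underline{\cM}_S(M_0) \simeq S \otimes_{W(k)} M_0$ with compatible filtration and Frobenius, so base change along $S \twoheadrightarrow W(k)$, $u \mapsto 0$, recovers $M_0$ together with its Fontaine-Laffaille structure. For the other direction $\underline{\cM}_S \circ M_{\mathrm{FL}} \cong \Id$, the adapted basis produces a natural comparison morphism $\underline{\cM}_S(M_{\mathrm{FL}}(\cM)) \to \cM$ which is an isomorphism modulo $u$, hence an isomorphism by Nakayama's lemma applied to finite free $S$-modules. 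As a sanity check, one may alternatively appeal to the main theorem on strongly divisible $S$-lattices (\cite{Bre02, Liu08, Gao13}) to identify both sides with the same $G_K$-stable lattice in a common crystalline representation.

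The decisive step is therefore the verification of strong divisibility for $M_{\mathrm{FL}}(\cM)$; everything else is essentially formal once the explicit bases provided by \cite{GLS14} are in hand. The whole argument is consistent with the general strategy outlined in the introduction: the ``new'' proof proceeds by pushing structural information about Kisin modules all the way down to Fontaine-Laffaille modules, bypassing the original crystalline cohomology inputs of \cite{FL82}.
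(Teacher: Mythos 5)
Your definition of the filtration on $M_{\mathrm{FL}}(\cM)$ is incorrect, and this is a genuine gap rather than a matter of detail. You set $\Fil^i M := \mathrm{image}\bigl(\Fil^i\cM \to \cM/u\cM\bigr)$, but reduction modulo $u$ is evaluation at $u=0$, whereas the Fontaine--Laffaille filtration must be recovered by evaluating at $u=\pi$ (equivalently, modulo $E(u)$). Already the simplest example breaks: take $\cM = S$ with $\Fil^1\cM = \Fil^1 S$, corresponding to the trivial FL module. Since $K=K_0$ and $E(u) \equiv -\pi \pmod u$, the image of $\Fil^1 S$ in $W(k) = S/uS$ is $pW(k)$, which is not a direct summand of $W(k)$; the correct answer is $\Fil^1 M = 0$. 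So with your definitions $M_{\mathrm{FL}}(\cM)$ is not even an object of $\pFLfr$, and the claim that $M_{\mathrm{FL}}\circ\underline\cM_S \cong \mathrm{Id}$ is ``immediate'' is false: one computes $\Fil^i M_{\mathrm{FL}}\bigl(\underline\cM_S(M_0)\bigr) = \sum_{j} p^{\max(i-j,0)}\Fil^j M_0$, which strictly contains $\Fil^i M_0$ in general.

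What you are missing is the central technical mechanism of the paper: to put a filtration on $M = \cM/u\cM$, one first constructs a $\varphi$-equivariant section $s\colon M \to \cM$ (Prop.~\ref{prop S section}), giving an identification $1\otimes s\colon S\otimes_{W(k)} M \simeq \cM$, and then sets $\Fil^i M := f_\pi\bigl((1\otimes s)^{-1}(\Fil^i\cM)\bigr)$ where $f_\pi(s(u)\otimes m)=s(\pi)m$ evaluates at $\pi$, not at $0$. The construction of $s$ is precisely where \cite{GLS14} enters: the shape theorem supplies the adapted basis needed to run a convergent iteration (in the spirit of \cite[Prop.~2.4.1]{Liufil}) producing $s$ inside $\GL_d(S)$ rather than merely $\GL_d(S[1/p])$. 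You correctly sense that \cite{GLS14} is the essential input, but deploy it on the wrong target. Once the section exists, the strong divisibility of $M$ --- which you flag as the ``real obstacle'' --- follows formally (Lemma~\ref{lemFLtoS}), and the genuine burden shifts to comparing the tensor-product filtration on $S\otimes_{W(k)}M$ with $\Fil^i\cM$ (Lemmas~\ref{lemfil1} and~\ref{fil3}), which in turn uses Breuil's rational theory \cite[Prop.~6.2.1.1]{Bre97}. Likewise your Nakayama argument for $\underline\cM_S\circ M_{\mathrm{FL}}\cong\mathrm{Id}$ only produces an $S$-module isomorphism; matching the filtrations and $\varphi_r$ is the real content and again hinges on the $\varphi$-equivariance of $s$.
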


The above theorem establishes a ``direct" equivalence of categories between Fontaine-Laffaille modules and strongly divisible modules. In particular, we can obtain a new proof of (some part of) Fontaine-Laffaille theory (see \S \ref{subsec final}).

\begin{remark}
The $p=2$ case can also be established, pending some work in preparation \cite{Xiyuanwang}. See Remark \ref{rem p=2} for more detail.
\end{remark}

\subsection{Notations}
Here, we only quickly recall some standard notations in $p$-adic Hodge theory. For more details, the readers can see, e.g., \cite{Gao13}.

Let $p$ be a prime, $k$ a perfect field of characteristic $p$, $W(k)$ the ring of Witt vectors, $K_0 = W(k)[\frac{1}{p}]$ the fraction field, $K$ a finite totally ramified extension of $K_0$, $e =e(K/K_0)$ the ramification index and $G=G_K =\Gal(\overline{K}/K)$ the absolute Galois group for a fixed algebraic closure $\overline{K}$.
Fix a uniformizer  $\pi$ in $K$ and the Eisenstein polynomial $E(u) \in W(k)[u]$ of $\pi$. Define $\pi _n \in \overline K$ inductively such that $\pi_0 = \pi$ and $(\pi_{n+1})^p = \pi_n$. Set $K_{\infty} : = \cup _{n = 1} ^{\infty} K(\pi_n)$, and let $G_{\infty}:= \Gal (\overline K / K_{\infty})$.

We use $\Rep_{\Zp}^{\tor}(G_K)$ (resp. $\Rep_{\Zp}^{\fr}(G_K)$) to denote the category finite $p$-power torsion (resp.  $\Zp$-finite free) representations of $G_K$.
We use $\Rep_{\Zp}^{\fr, \st, [0, r]}(G_K)$ (resp. $\Rep_{\Zp}^{\fr, \cris, [0, r]}(G_K)$) to denote the category of finite free $\Zp$-lattices in semi-stable (resp. crystalline) representations of $G_K$ with Hodge-Tate weights in the range $[0, r]$.

Let $\O_{\overline K}$ be the ring of integers of $\overline K$. Let $R:=\varprojlim \limits_{x\to x^p} \O_{\overline K}/ p \O_{\overline K}$, and let $W(R)$ be the ring of Witt vectors. There are natural Frobenius endomorphisms on $R$ and $W(R)$.
Let $A_{\t {cris}}$ and $B_{\st}$ be the usual period rings (see, e.g., \cite[\S 1]{Gao13}).

Let $\mathfrak{S}=W(k)[\![u]\!]$ with the Frobenius endomorphism $\varphi_{\huaS}: \huaS \to \huaS$ which acts on $W(k)$ via arithmetic Frobenius and sends $u$ to $u^p$.
Via the map $u\mapsto [\underline \pi]$, there is an embedding $\gs \hookrightarrow W(R)$ which is compatible with
Frobenius endomorphisms.

We denote  by $S$  the $p$-adic completion of the divided power
envelope of $W(k)[u]$ with respect to the ideal generated by $E(u)$ (see, e.g., \cite[\S 2.2]{Gao13} for more details).
There is a unique map (Frobenius) $\varphi_S: S \to S$ which extends
the Frobenius on $\gs$. We write $N_S$ for the $W(k)$-linear derivation on $S$ such that $N_S(u)= -u$. Let  $\Fil ^j S\subset S $ be the $p$-adic completion of the ideal generated by $\gamma_i (E(u)):= \frac{E(u)^i}{i!}$ with $ i \geq j$.
For $1 \leq j \leq p-1$, we have $\varphi(\Fil^j S) \subset p^jS$, and we denote $\varphi_j = \frac{\varphi}{p^j}: \Fil^j S \to S$.
We denote $c =\frac{\varphi(E(u))}{p}$ which is a unit in $S$.
The embedding $\huaS \to W(R)$ extends to an embedding $S \hookrightarrow A_\cris$ compatible with Frobenius $\varphi$ and filtration.

When $V$ is a semi-stable representation of $G_K$, we let $D_{\textnormal{st}} (V):  = (B_{\st} \otimes_{\Qp} V^{\vee})$ where $V^{\vee}$ is the dual representation of $V$. The Hodge-Tate weights of $V$ are defined to be $i \in \mathbb Z$ such that $\mathrm{gr}^i D_{\st}(V) \neq 0$.  For example, for the cyclotomic character $\varepsilon_p$, its Hodge-Tate weight is $\{ 1\}$. In this paper, we only consider representations with non-negative Hodge-Tate weights.

\subsection{Acknowledgement}
The author thanks Tong Liu for some related discussions. The paper is written when the author is a postdoc in University of Helsinki. The postdoc position is funded by Academy of Finland, through Kari Vilonen. The author thanks Kari Vilonen for constant support.

\section{Objects in integral $p$-adic Hodge theory}
In this section, we recall some definitions in (rational) $p$-adic Hodge theory and integral $p$-adic Hodge theory.

\subsection{Rational $p$-adic Hodge theory} \label{subrational}
In this subsection, let $r$ be a fixed integer in the range $[0, \infty)$.

A filtered $(\varphi,N)$-module $D$ is a finite dimensional $K_0$-vector space equipped with the usual $\varphi, N$ and a decreasing filtration on $(\Fil^{i}D_{K})_{i\in\mathbb{Z}}$ on $D_{K}=D\otimes_{K_0} K$. In this paper, we only consider $D$ such that $\Fil^0 D_K=D_K$ (i.e., those with non-negative filtration jumps). We use $\MF$ to denote this category. Recall as in \cite[Def. 1.0.4]{Gao13}, we define the following:

\begin{defn}
 For $D \in \MF$ with Hodge-Tate weights in $\{ 0, \ldots, r\}$,
\begin{enumerate}
\item  $D$ is called \'{e}tale if $\Fil^r D_K =D_K$, it is called multiplicative if $\Fil^1 D_K =\{0\}$.
\item $D$ is called nilpotent if it does not have nonzero multiplicative submodules, it is called unipotent if it does not have nonzero \'{e}tale quotients.
\end{enumerate}
\end{defn}

Given a semi-stable representation $V$, let $D$ be the corresponding weakly-admissible filtered $(\varphi, N)$-module (via the main theorem of \cite{CF00}). Then it is easy to show that $D$ is multiplicative if and only if $V$ is an unramified representation, $D$ is nilpotent if and only if $V$ contains no nonzero unramified quotient. Also, $D$ is \'{e}tale (resp. unipotent) if and only the Cartier dual $D^{\vee}$ (see \cite[\S 1]{Gao13}) is multiplicative (resp. nilpotent). We call the representation $V$ \'{e}tale (resp. multiplicative, nilpotent, unipotent) if the corresponding $D$ is \'{e}tale (resp. multiplicative, nilpotent, unipotent).
A $\Zp$-representation $T \in \Rep_{\Zp}^{\fr, \st}(G_K)$ is called \'{e}tale (resp. multiplicative, nilpotent, unipotent) if the $\Qp$-representation $T[\frac 1 p]$ is.

Let $S_{K_0}=S[\frac 1 p]$, and extend $\varphi, N$ actions on $S$ to $S_{K_0}$ ($\Qp$-linearly).
Let $\Fil^i S_{K_0}:= \Fil^i S\otimes_{\Zp}\Qp$.
As in \cite{Bre97}, let $\bigMF$ be the category whose objects are finite free $S_{K_0}$-modules $\D$ with:
\begin{itemize}
 \item a $\varphi_{S_{K_0}}$-semi-linear morphism $\varphi_{\D}: \D \to \D$ such that the determinant of $\varphi_{\D}$ is invertible in $S_{K_0}$;
 \item a decreasing filtration $\{\Fil^i\D\}_{i=0}^{\infty}$ of $S_{K_0}$-submodules of $\D$ such that $\Fil^0\D=\D$ and $\Fil^i S_{K_0} \Fil^j \D \subseteq \Fil^{i+j}\D$;
 \item a $K_0$-linear map $N: \D \to \D$ such that $N(fm)=N(f)m+fN(m)$ for all $f\in S_{K_0}$ and $m \in \D$, $N\varphi=p \varphi N$ and $N (\Fil^i \D) \subseteq \Fil^{i-1}\D$.
\end{itemize}
Morphisms in the category are $S_{K_0}$-linear maps preserving filtrations and commuting with $\varphi$ and $N$. A sequence $0 \to \D_1 \to \D \to
\D_2 \to 0$ is called short exact if it is short exact as $S_{K_0}$-modules and the sequences on filtrations $0 \to \Fil^i \D_1 \to \Fil^i \D \to
\Fil^i \D_2 \to 0$ are short exact for all $i$. We call $\D_2$ a quotient of $\D$ in this case.

For $D \in \MF$, we can associate an object in $\bigMF$ by $\bigD:= S\otimes_{W(k)}D$ and
 \begin{itemize}
 \item $\varphi: =\varphi_S \otimes \varphi_D$;
 \item $N:= N\otimes Id + Id\otimes N$;
 \item $\Fil^0\bigD :=\bigD$ and inductively,
 $$\Fil^{i+1}\bigD := \{ x \in \bigD | N(x) \in \Fil^i \bigD \text{ and } f_{\pi}(x) \in \Fil^{i+1}D_K  \},$$
 where $f_{\pi}: \bigD \twoheadrightarrow D_K$ by $s(u)\otimes x \mapsto s(\pi)\otimes x$.
 \end{itemize}

\begin{thm} \cite[\S 6]{Bre97} \label{bigD}
The functor above induces an equivalence between $\MF$ and $\bigMF$.
\end{thm}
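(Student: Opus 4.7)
The approach is to write down an explicit quasi-inverse $\Psi: \bigMF \to \MF$ and verify that both compositions are naturally isomorphic to the identity. Given $\D \in \bigMF$, set $\Psi(\D) := \D/u\D$. Since $S_{K_0}/uS_{K_0} \cong K_0$ this is a finite-dimensional $K_0$-vector space of the same rank as the free $S_{K_0}$-module $\D$, and the identities $\varphi(u)=u^p$ and $N_S(u)=-u$ ensure that $\varphi_\D$ and $N_\D$ preserve $u\D$ and hence descend to $\Psi(\D)$. Using that $f_\pi$ factors through $\D/E(u)\D \cong \Psi(\D) \otimes_{K_0} K$, equip $\Psi(\D)_K$ with $\Fil^i \Psi(\D)_K := f_\pi(\Fil^i \D)$.

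One composition is easy: for $D \in \MF$, $(S \otimes_{W(k)} D)/u \cong D$ with matching $(\varphi, N)$, and induction on $i$ applied to the recursive definition of $\Fil^i\Phi(D)$, together with surjectivity of $f_\pi$ and the filtration-shifting property of $N$, yields $f_\pi(\Fil^i\Phi(D)) = \Fil^i D_K$, giving a natural iso $\Psi(\Phi(D)) \cong D$ in $\MF$.

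The substantive direction is essential surjectivity, $\Phi \circ \Psi \cong \Id$. Writing $D := \Psi(\D)$, the plan is to produce a $K_0$-linear section $\sigma : D \hookrightarrow \D$ of the projection that is simultaneously $\varphi$- and $N$-equivariant; its $S_{K_0}$-linear extension will then be the required isomorphism $S_{K_0} \otimes_{K_0} D \xrightarrow{\sim} \D$, and filtration-compatibility falls out of the same recursive characterization used above. I would build $\sigma$ by Frobenius descent in two rounds. Starting from any $K_0$-linear set-theoretic section $f_0$ and the errors $E_n(x) := \varphi_\D(f_n(x)) - f_n(\varphi_D(x))$, correct inductively by $f_{n+1} := f_n + g_{n+1}$ with $g_{n+1}(y) := E_n(\varphi_D^{-1}(y))$, noting that $\varphi_D$ is invertible since $\det \varphi_\D$ is a unit. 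The identity $E_{n+1} = \varphi \circ g_{n+1}$, combined with $\varphi(u^m\D) \subseteq u^{mp}\D$, forces $E_n \in \Hom_{K_0}(D, u^{p^n}\D)$, so $\sum g_n$ converges to a $\varphi$-equivariant section $\sigma_\varphi$. Transporting $N_\D$ back through the induced iso $S_{K_0}\otimes_{K_0} D \xrightarrow{\sim} \D$ yields a derivation of the form $(N_S \otimes 1) + (1 \otimes N_D) + \Delta$, where $\Delta$ is $S_{K_0}$-linear, trivial mod $u$, and satisfies $\varphi\Delta = p\Delta\varphi$ (from $N\varphi = p\varphi N$); an analogous iteration, this time with the factor $p$ supplying the extra contraction, kills $\Delta$ via a $\varphi$-commuting $S_{K_0}$-linear automorphism of $S_{K_0}\otimes_{K_0} D$, producing the desired $N$-equivariant $\sigma$.

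The main obstacle will be controlling convergence: $S_{K_0}$ is $p$-adically but not $u$-adically complete, so one must work with a combined topology on $S_{K_0}$ (tracking $p$-adic and $u$-adic orders simultaneously) and verify that each iteratively-produced correction genuinely lies in $S_{K_0} \otimes_{K_0} D$ rather than merely in $K_0\llb u \rrb \otimes_{K_0} D$. A secondary point is uniqueness and functoriality of $\sigma$, which can be settled by observing that any two $(\varphi,N)$-equivariant sections differ by a $(\varphi,N)$-equivariant $S_{K_0}$-linear automorphism of $\D$ that is trivial mod $u$; a third run of Frobenius descent forces such an automorphism to be the identity.
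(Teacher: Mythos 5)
Your strategy---reduce mod $u$ and recover $\D$ via a $(\varphi,N)$-equivariant section built by Frobenius iteration---is exactly Breuil's in \cite[\S 6]{Bre97} (which the paper cites without reproducing a proof), and the $\varphi$-iteration itself is sound; the convergence worry you raise resolves cleanly because $e=1$ gives $u^m/m!\in S$, so $u^m\in(m!)\,S$ and $v_p(u^m)\to\infty$, which dominates the bounded-rate $p$-denominators introduced by iterating $\varphi_\D$ and $\varphi_D^{-1}$, and the corrections stabilize inside a $p$-adically complete $S$-lattice of $\D$. However, your second round of Frobenius descent is misconceived: by the very contraction you invoke, any $\varphi$-commuting $S_{K_0}$-linear automorphism of $S_{K_0}\otimes_{K_0}D$ congruent to the identity mod $u$ is the identity, so there is no nontrivial adjustment available. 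What is actually true is that the unique $\varphi$-equivariant section is automatically $N$-equivariant, and one should prove $\Delta=0$ directly: setting $s':=N_\D\circ\sigma_\varphi-\sigma_\varphi\circ N_D$, one has $s'(D)\subseteq u\D$ and $p\,\varphi_\D\circ s'=s'\circ\varphi_D$, and iterating this relation (again using $v_p(u^{p^n})\to\infty$) forces $s'=0$.

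The more serious gap is the filtration compatibility, which does not ``fall out of the same recursive characterization.'' The recursion \emph{defines} $\Fil^i$ on $\Phi(D)$, but for $\Phi\circ\Psi\cong\Id$ you must \emph{prove} that the given filtration on an arbitrary $\D\in\bigMF$ satisfies that recursion with $\Fil^iD_K:=f_\pi(\Fil^i\D)$, i.e.\ that $\Fil^{i+1}\D=\{x\in\D:N(x)\in\Fil^i\D,\ f_\pi(x)\in\Fil^{i+1}D_K\}$. One inclusion is Griffiths transversality; the reverse inclusion---that $N(x)\in\Fil^i\D$ together with $f_\pi(x)\in f_\pi(\Fil^{i+1}\D)$ already forces $x\in\Fil^{i+1}\D$---is a genuine lemma that occupies a substantial share of Breuil's \S 6 and is not addressed by anything in your proposal. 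Without it you have matched the $(\varphi,N)$-module structures but not the filtrations, which is where the content of the equivalence lies.
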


\begin{defn} Let $\D \in \bigMF$.
\begin{enumerate}
  \item $\D$ is called \'{e}tale if  $\Fil^r \D =\D$, it is called multiplicative if $\Fil^i \D = \Fil^i S_{K_0} \D$ for some $1 \leq i \leq r$ (equivalently, for all $1 \leq i \leq r$).
  \item  $\D$ is called nilpotent if it does not have nonzero multiplicative submodules, it is called unipotent if it does not have nonzero \'{e}tale quotients.
\end{enumerate}
\end{defn}

\begin{lemma}
$\D \in \bigMF$ is \'{e}tale (resp. multiplicative, nilpotent, unipotent) if and only if the corresponding $D \in \MF$ (via Theorem \ref{bigD}) is so.
\end{lemma}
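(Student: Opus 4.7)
The plan splits the four properties into two pairs. Étale and multiplicative are genuine filtration conditions that I would verify directly from the inductive definition of $\Fil^\bullet \bigD$, using the projection $f_\pi \colon \bigD \twoheadrightarrow D_K$. Nilpotent and unipotent I would then deduce from this first pair by invoking the equivalence of categories in Theorem \ref{bigD}, which transports sub-objects and quotient-objects faithfully between $\MF$ and $\bigMF$.

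For étale, first note that $f_\pi$ preserves the filtration by construction: $x \in \Fil^{i+1} \bigD$ implies $f_\pi(x) \in \Fil^{i+1} D_K$. If $D$ is étale, then $\Fil^r D_K = D_K$, which together with the standing hypothesis that the Hodge-Tate weights lie in $[0, r]$ forces $\Fil^i D_K = D_K$ for every $0 \leq i \leq r$. A short induction on $i$ using the inductive definition of $\Fil^\bullet \bigD$ then shows $\Fil^i \bigD = \bigD$ throughout that range; in particular $\bigD$ is étale. Conversely, if $\Fil^r \bigD = \bigD$, applying $f_\pi$ yields $D_K = f_\pi(\Fil^r \bigD) \subseteq \Fil^r D_K$, hence $\Fil^r D_K = D_K$.

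For multiplicative, the key computation is the identity $\ker(f_\pi) = \Fil^1 S_{K_0} \cdot \bigD$, which follows from the isomorphism $S_{K_0}/\Fil^1 S_{K_0} \cong K$ (via $u \mapsto \pi$) together with the $S_{K_0}$-freeness of $\bigD$. I would also verify the surjectivity $f_\pi(\Fil^1 \bigD) = \Fil^1 D_K$: given $d \in \Fil^1 D_K$, any lift $\tilde d \in \bigD$ automatically satisfies $N(\tilde d) \in \bigD = \Fil^0 \bigD$ and $f_\pi(\tilde d) = d \in \Fil^1 D_K$, hence $\tilde d \in \Fil^1 \bigD$. These two facts together give $\Fil^1 D_K = 0 \iff \Fil^1 \bigD = \Fil^1 S_{K_0} \bigD$, which is the $i = 1$ case of the multiplicative condition; the ``some iff all'' part of that definition then handles the general $i$.

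For nilpotent and unipotent, Theorem \ref{bigD} tells us $D \mapsto \bigD$ is an equivalence, so sub-objects of $D$ correspond bijectively to sub-objects of $\bigD$, and similarly for quotients. Combined with the étale and multiplicative dictionaries just established, $D$ admits a nonzero multiplicative sub-object iff $\bigD$ does (the nilpotent case), and $D$ admits a nonzero étale quotient iff $\bigD$ does (the unipotent case). I expect the main obstacle to be the multiplicative step, specifically the two filtration identities involving $\ker f_\pi$ and $\Fil^1 \bigD$, since these are what pin down how the inductive filtration on $\bigD$ relates to the \emph{a priori} unrelated filtration on $D_K$; the étale step and the nilpotent/unipotent implications then follow with minimal additional work.
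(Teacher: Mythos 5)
The paper itself offers no argument here: the lemma is disposed of by a single citation to \cite[Prop.~3.2.2]{Gao13}. Your proposal, by contrast, is an actual direct proof, and it is essentially correct. The core of your argument — that in the inductive definition of $\Fil^\bullet\bigD$, the condition at level $1$ reduces to $\Fil^1\bigD=f_\pi^{-1}(\Fil^1 D_K)$ because the $N$-condition is vacuous there, combined with $\ker f_\pi=\Fil^1 S_{K_0}\bigD$ by freeness of $\bigD$ — is exactly the right way to pin down the multiplicative case, and the induction you sketch for the \'etale case works as stated (you need $\Fil^{i+1}D_K=D_K$, which holds for $i+1\le r$, and that is the range you need). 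Reducing nilpotent/unipotent to the \'etale/multiplicative dictionary plus the equivalence of Theorem~\ref{bigD} is also the expected move.

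Two small points you should not leave implicit. First, for the multiplicative step you quietly use that a sub-$S_{K_0}$-module of a free module is torsion-free, so $\ker f_\pi=\Fil^1 S_{K_0}\cdot\bigD$ is immediate; you say this, but it is worth flagging that this is where freeness of $\bigD$ is genuinely used. Second, in passing from \'etale/multiplicative to nilpotent/unipotent you invoke that the equivalence of Theorem~\ref{bigD} transports sub-objects and quotients. This is true, but the relevant notion of sub-object/quotient in both $\MF$ and $\bigMF$ is the \emph{strict} one (filtration induced on the sub, filtration sequence exact for the quotient), and you should say explicitly that Breuil's equivalence is exact in this sense — otherwise a reader could reasonably worry that a non-strict monomorphism on one side might not correspond to a strict sub-object on the other. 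With that caveat added, your proof stands, and it usefully records an argument that the paper leaves to the literature.
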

\begin{proof}
See \cite[Prop. 3.2.2]{Gao13}.
\end{proof}

\subsection{Fontaine-Laffaille modules}

\newcommand{\pFL}{\mathrm{'FL}_{W(k)}}
\newcommand{\pFLfr}{\mathrm{'FL}_{W(k)}^{\mathrm{fr}}}
\newcommand{\pFLtor}{\mathrm{'FL}_{W(k)}^{\mathrm{tor}}}

\newcommand{\FL}{\mathrm{FL}_{W(k)}}
\newcommand{\FLfr}{\mathrm{FL}_{W(k)}^{\mathrm{fr}}}
\newcommand{\FLtor}{\mathrm{FL}_{W(k)}^{\mathrm{tor}}}

In this subsection, assume $K=K_0$ (i.e., $K$ is unramified), and let $r$ be a fixed integer in the range $[0, p-1]$.

\begin{defn}\hfill
\begin{enumerate}
  \item Let $\pFL$ be the category of finitely generated $W(k)$-modules $M$ with
  \begin{itemize}
    \item a decreasing filtration $\{\Fil^i M\}_{i=0}^{\infty}$ such that $\Fil^{i+1}M$ is a direct summand of $\Fil^i M$ for all $i$, and $\Fil^0 M =M, \Fil^{r+1}M=\{0\}$,
    \item  $\Frob_{W(k)}$-semi-linear maps $\varphi_i: \Fil^i M \to M$ such that  $\varphi_i\mid_{\Fil^{i+1}M} = p\varphi_{i+1}$.
  \end{itemize}
  Morphisms in $\pFL$ are $W(k)$-linear homomorphisms compatible with filtration and $\varphi_i$.

\item Let $\FL$ be the subcategory of $\pFL$ where $\sum_{i=0}^{r} \varphi_i(\Fil^iM)=M$.

\item Let $\pFLfr$ (resp. $\pFLtor$) be the subcategory of $\pFL$ where $M$ is finite free (resp. torsion) over $W(k)$.

\item Let $\FLfr$ (resp. $\FLtor$) be the subcategory of $\FL$ where $M$ is finite free (resp. torsion) over $W(k)$.
\end{enumerate}
\end{defn}

A sequence $0 \to M_1 \to M \to M_2 \to 0$ in $\pFL$ is called short exact if it is exact as a sequence of $W(k)$-modules, and the induced sequence $0 \to \Fil^i M_1 \to \Fil^i M \to \Fil^i M_2 \to 0$ is also short exact for all $i$. In this case, we call $M_2$ a quotient of $M$.

\begin{defn}Let $M \in \pFL$.
\begin{enumerate}
  \item $M$ is called \'etale if $\Fil^r M=M$. $M$ is called multiplicative if $\Fil^1 M=\{0\}$.
  \item $M$ is called unipotent if does not have non-zero \'etale quotients. $M$ is called nilpotent if it does not have non-zero multiplicative submodules.
\end{enumerate}
\end{defn}

\begin{lemma} \label{lemFLunip}
For $M \in \FLfr$, take any $W(k)$-basis $e=(e_1, \ldots, e_d)$, and let $F \in \Mat_d(W(k))$ be the matrix of $\varphi:=\varphi_0$ with respect to $e$. Then there exists $V \in \Mat_d(W(k))$ such that $FV=p^rI_d$. $M$ is unipotent if and only if $\Pi_{n=0}^{\infty} \varphi^n(V)=0$.
\end{lemma}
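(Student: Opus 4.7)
I would structure the proof into two parts: the existence of $V$, and the characterization of unipotency.

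\emph{Existence of $V$.} Since $M \in \FLfr$ has filtration by direct summands, I first pick a $W(k)$-basis $(e_j)_{j=1}^d$ adapted to the filtration; each $e_j$ then has a well-defined filtration level $\ell_j \in \{0,\dots,r\}$ with $e_j \in \Fil^{\ell_j} M \setminus \Fil^{\ell_j+1} M$. The relation $\varphi_0|_{\Fil^{\ell_j} M} = p^{\ell_j} \varphi_{\ell_j}$ yields a factorization $F = G D$ where $D = \diag(p^{\ell_1},\dots,p^{\ell_d})$ and $G$ records the divided Frobenii, the $j$-th column of $G$ being the coordinates of $\varphi_{\ell_j}(e_j)$. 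The FL axiom $\sum_i \varphi_i(\Fil^i M) = M$ is precisely the condition $G \in \GL_d(W(k))$, so $V := p^r F^{-1} = p^r D^{-1} G^{-1}$ lies in $\Mat_d(W(k))$ because $p^r D^{-1} = \diag(p^{r-\ell_j})$ is integral ($\ell_j \le r$). For a general basis related to the adapted one by $P \in \GL_d(W(k))$, an integral change-of-basis calculation ($V' = \varphi(P)^{-1} V P$) shows $V$ remains integral.

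\emph{Characterization of unipotency.} Since quotients of \'{e}tale FL modules remain \'{e}tale, it suffices to rule out rank-one \'{e}tale quotients. A rank-one \'{e}tale quotient $\pi\colon M \twoheadrightarrow W(k) f$ with $\varphi_r(f) = \lambda f$ (for some $\lambda \in W(k)^\times$) is encoded by a functional $\mu \in W(k)^d$ via $\pi(v) = \mu(v) f$. Unwinding the compatibility $\pi \circ \varphi_i = \varphi_i^N \circ \pi$ on $\Fil^i M$, and using $\varphi_i^N = p^{r-i} \lambda \varphi$ on the \'{e}tale $N$, yields for each $j$ the equation $(G^T \mu)_j = p^{r-\ell_j} \lambda \varphi(\mu_j)$, equivalently $F^T \mu = p^r \lambda \varphi(\mu)$, equivalently $\mu = \lambda V^T \varphi(\mu)$. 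Conversely, any such pair $(\mu,\lambda)$ with $\bar\mu \neq 0$ defines such a quotient (and one may always reduce to the case $\bar\mu \neq 0$ by dividing out the largest $p$-power). Hence $M$ is unipotent iff this fixed-point equation has no nonzero mod-$p$ solution.

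Reducing modulo $p$, this asks that the Frobenius-semi-linear operator $\bar\tau\colon k^d \to k^d$, $c \mapsto \bar V^T c^{(p)}$, have no semi-linear eigenvector with nonzero eigenvalue. By the Fitting decomposition for semi-linear endomorphisms on a finite-dimensional vector space over the perfect field $k$, this is equivalent to $\bar\tau$ being nilpotent; a standard Hensel-style lifting (possibly via $W(\bar k)$ and descending by the Galois action) handles the passage from mod-$p$ to honest $W(k)$-solutions. Nilpotency of $\bar\tau$ amounts to the iterated matrix $\bar V^T \varphi(\bar V^T) \cdots \varphi^{n-1}(\bar V^T)$ vanishing for $n$ large, which, using the transposition-invariance of the Fitting bijective dimension together with the fact that transposing a product reverses its order, is equivalent to the product $V \varphi(V) \cdots \varphi^{n-1}(V)$ converging to zero in $\Mat_d(W(k))$, i.e., $\prod_{n=0}^\infty \varphi^n(V) = 0$.

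The main obstacle is the clean derivation of the equation $F^T \mu = p^r \lambda \varphi(\mu)$ from the \'{e}tale-quotient data and the Hensel-style lifting of mod-$p$ eigenvectors to honest ones; once these are in place, the Fitting-decomposition argument (and the symmetry between the two orderings of the matrix product) is routine.
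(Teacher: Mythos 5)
Your existence argument for $V$ is fine (if more explicit than the paper's ``clear''). But your proof of the unipotency criterion diverges completely from the paper's and has a real gap. The paper reasons as follows: by \cite[Lem.~2.3.2]{Gao13}, $\bigcap_{n\ge 0}\varphi^n(M)$ (with its induced filtration) is the maximal multiplicative submodule of $M$, which immediately gives the \emph{nilpotency} criterion $\prod_{n\ge 0}\varphi^n(F)=0$; then one passes to \emph{unipotency} by Cartier duality inside $\FLfr$, under which $F$ is replaced (up to twist) by $V$. This is a two-line reduction that hides all the semilinear bookkeeping inside the duality.

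You instead try to characterize unipotency directly by analyzing rank-one \'etale quotients, and the crucial reduction step is unjustified: the sentence ``Since quotients of \'etale FL modules remain \'etale, it suffices to rule out rank-one \'etale quotients'' is a non-sequitur. It presupposes that every nonzero \'etale FL module over $W(k)$ admits a rank-one \'etale quotient, which is false for general perfect $k$ (over $k=\Fp$, a rank-two \'etale module whose Frobenius matrix has irreducible characteristic polynomial mod $p$ has no rank-one quotient). One can repair this by base-changing to $W(\bar k)$ and descending, but you only invoke $W(\bar k)$ in the later Hensel step, not here, and the descent of ``has nonzero \'etale quotient'' needs to be spelled out. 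Two further steps are asserted rather than proved: the Hensel-style lifting of a mod-$p$ solution of the semilinear eigenequation $\mu=\lambda V^{T}\varphi(\mu)$ to a $W(k)$ (or $W(\bar k)$) solution is not an application of ordinary Hensel (the map $\nu\mapsto \nu-\bar\lambda\bar V^{T}\varphi(\nu)$ is only additive, and its surjectivity needs $\bar k$ and an Artin--Schreier-type argument); and the passage from vanishing of $\bar V^{T}\varphi(\bar V^{T})\cdots\varphi^{n-1}(\bar V^{T})$ to vanishing of $V\varphi(V)\cdots\varphi^{n-1}(V)$ requires comparing a product with its \emph{reverse} (transposition reverses the order of the $\varphi$-twists), which is exactly the content hidden in the paper's appeal to Cartier duality and should not be dismissed as ``routine.'' In short: your route is salvageable but substantially longer and currently has a genuine hole at the reduction-to-rank-one step, whereas the paper's proof sidesteps all of this via the maximal multiplicative submodule plus duality.
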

\begin{proof}
The existence of $V$ is clear. By \cite[Lem. 2.3.2]{Gao13}, it is easy to see that $\cap_{n=0}^{\infty} \varphi^n(M)$ with its induced filtration is the maximal multiplicative submodule of $M$. Thus, $M$ is nilpotent if and only if $\Pi_{n=0}^{\infty} \varphi^n(F)=0$. By considering Cartier dual in the category $\FLfr$, it is easy to see that $M$ is unipotent if and only if $\Pi_{n=0}^{\infty} \varphi^n(V)=0$.
\end{proof}

We define a  functor $T_\cris^*$ from the category $\FLtor$ (resp. $\FLfr$) to $\Rep_{\Zp}^{\tor}(G_{K})$ (resp. $\Rep_{\Zp}^{\fr}(G_{K})$ ):
$$T_\cris^* (M): = \Hom_{W(k), \varphi_i, \Fil ^i} (M, \Acris \otimes_{\Z_p} (\Q_p/\Z_p)) \text{ if } M \in \FLtor , $$
 and
$$T_\cris^* (M) := \Hom_{W(k), \varphi_i, \Fil ^i} (M, \Acris) \text{ if } M \in \FLfr.$$

The following is the main theorem of what we nowadays call Fontaine-Laffaille theory.
\begin{thm}\cite{FL82} \label{thm FL}
\begin{enumerate}
  \item When $0 \leq r \leq p-2$
  \begin{enumerate}
    \item $T_\cris^* :  \FLtor \to \Rep_{\Zp}^{\tor}(G_{K})  $ is exact and fully faithful. The essential image is closed under taking sub-objects and quotients.
    \item $T_\cris^* :  \FLfr \to \Rep_{\Zp}^{\fr, \cris, [0, r]}(G_{K})  $ is an equivalence of categories.
  \end{enumerate}

  \item When $0 \leq r \leq p-1$
  \begin{enumerate}
   \item Restricting to subcategories of unipotent objects, $T_\cris^* :  \FLtor \to \Rep_{\Zp}^{\tor}(G_{K})  $ is exact and fully faithful. The essential image is closed under taking sub-objects and quotients.
    \item Restricting to subcategories of unipotent objects, $T_\cris^* :  \FLfr \to \Rep_{\Zp}^{\fr, \cris, [0, r]}(G_{K})  $ is an equivalence of categories.
  \end{enumerate}
\end{enumerate}
\end{thm}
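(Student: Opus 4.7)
The plan is to deduce Theorem \ref{thm FL} from Theorem \ref{thm intro}, which provides a quasi-inverse $M_{\mathrm{FL}}$ to $\underline{\cM}_S$, combined with the classification of $\Zp$-lattices in semi-stable representations by strongly divisible $S$-lattices (due to Breuil, Liu and Gao). Denoting by $T_S$ the Galois functor on strongly divisible $S$-lattices which realises that classification, my first task is to check the compatibility $T_\cris^* \cong T_S \circ \underline{\cM}_S$ on $\FLfr$. This is essentially a calculation with $A_\cris$-valued homomorphisms, using the embedding $S \hookrightarrow A_\cris$ compatibly with $\varphi$ and filtration, together with the fact that $\underline{\cM}_S(M) = S \otimes_{W(k)} M$ with filtration and $\varphi$ induced from those on $M$.

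Granted this compatibility, parts 1(b) and 2(b) of Theorem \ref{thm FL} follow immediately: the equivalence between $\FLfr$ (or its unipotent subcategory in the $r = p-1$ case) and the corresponding category of strongly divisible $S$-lattices, provided by Theorem \ref{thm intro}, composes with the Breuil--Liu--Gao equivalence to an equivalence with $\Rep_{\Zp}^{\fr, \cris, [0, r]}(G_K)$, provided one checks that the essential image consists exactly of the crystalline representations. That last point should be reduced to a statement on the strongly divisible side: a lattice $\cM$ lies in the image of $\underline{\cM}_S$ precisely when the associated filtered $(\varphi, N)$-module has $N = 0$, which corresponds to crystalline $V$. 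The matching of unipotent subcategories uses Lemma \ref{lemFLunip} together with the analogous characterisation of unipotency on the strongly divisible side.

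For the torsion statements 1(a) and 2(a), I would argue by dévissage. Any torsion Fontaine-Laffaille module admits a two-term resolution by objects of $\FLfr$; exactness of $\underline{\cM}_S$ and of $T_\cris^*$ (which one verifies directly from the definition) on such resolutions reduces full faithfulness to the free case. The essential image being closed under sub-objects and quotients is inherited from the corresponding property on the strongly divisible side, known from the main theorem of Breuil, Liu and Gao, together with the observation, extractable from Theorem \ref{thm intro}, that the property ``comes from a Fontaine-Laffaille module'' is preserved under sub-objects and quotients inside the ambient category of strongly divisible lattices.

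The main obstacle, and the heart of the paper, is the construction of $M_{\mathrm{FL}}$ itself. Given a strongly divisible $S$-lattice $\cM$ with Hodge-Tate weights in $[0, r]$, one cannot simply take $\cM / u\cM$: the filtration and the $\varphi_i$ do not descend naively in a way that yields an object of $\FL$. Instead, one must locate inside $\cM$ a canonical $W(k)$-submodule on which $\varphi$ and the induced filtration restrict to a Fontaine-Laffaille structure, and for this one exploits the fine $\varphi$-shape results of \cite{GLS14} for Kisin modules, transported to strongly divisible lattices via the functor of Theorem \ref{thm kisbre}. Showing that the resulting submodule is independent of choices, functorial, and truly quasi-inverse to $\underline{\cM}_S$ is where the bulk of the technical work will lie.
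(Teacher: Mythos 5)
Your treatment of parts (1)(b) and (2)(b) matches the paper's proof: compose the equivalence $\underline{\cM}_S$ of Theorem~\ref{thm equiv fl s} with the Breuil--Liu--Gao anti-equivalence $T_{\st}$ restricted to the subcategory $\mathrm{Mod}_S^{\varphi,N,\cris}$ where $N(\cM)\subset u\cM$ (Corollary~\ref{cor strdivlat}), and then verify the compatibility $T_\cris^*(M) = T_{\st}(\underline{\cM}_S(M))$, for which the paper simply cites \cite[Prop.~3.2.2]{IM15} rather than redoing the $A_\cris$-calculation. Your identification of ``crystalline'' with ``$N=0$ on $\cM/u\cM$, i.e.\ $N(\cM)\subset u\cM$'' and your matching of unipotent subcategories via the explicit criteria are exactly what Corollary~\ref{cor strdivlat} and Lemma~\ref{lemFLunip}/Lemma~\ref{lemSunip} supply.

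Your proposed d\'evissage proof of the torsion parts (1)(a) and (2)(a), however, is not what the paper does, and it has a genuine gap. The paper deliberately does \emph{not} reprove (1)(a) or (2)(a): the closing remark explains that doing so would require the torsion-side theory of \cite{Carusocrelle}, in particular an explicit classification of simple objects, which the author chooses not to carry out. Substantively, your reduction to the free case via two-term resolutions does not work as stated. Full faithfulness of $T_\cris^*$ on torsion objects does not follow from full faithfulness on free objects merely because both functors are exact: a morphism between torsion objects need not lift to a morphism of chosen free resolutions, and one must control kernels, cokernels, and $\Ext$-groups carefully (this is precisely the delicate d\'evissage carried out in \cite{Liu07} and \cite{Carusocrelle}, not an immediate consequence). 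Similarly, your claim that closure of the essential image under sub-objects and quotients is ``inherited from the corresponding property on the strongly divisible side, known from the main theorem of Breuil, Liu and Gao'' is unsupported by what the paper establishes: Theorem~\ref{thm strdivlat} is formulated only for finite free strongly divisible lattices, and the torsion analogue (together with the essential-image closure statement) is exactly the content of \cite{Carusocrelle} that the paper declines to invoke. So (1)(a) and (2)(a) cannot be deduced from the free-rank equivalence by the argument you sketch; within the scope of the paper, only (1)(b) and (2)(b) are reproved.
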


\begin{remark}
\begin{enumerate}
  \item Note that in the original paper \cite{FL82}, they are using a ``primitive version" of $\Acris$, see the explanation in \cite[\S 3.2.1]{Bre98ENS}, or \cite[\S 4]{Hattori-Rennes}.
  \item Theorem \ref{thm FL}(1)(b) and (2)(b) were not written down in \cite{FL82}. Written proofs can be found in \cite{Bre99SMF} and \cite{GL14} respectively.
\end{enumerate}
\end{remark}

\subsection{Kisin modules}
In this subsection, let $r$ be a fixed integer in the range $[0, \infty)$.

Let $\sfi$ be the category of finite free $\gs$-modules $\M$ equipped with
a $\varphi_{\gs}$-semilinear endomorphism $\varphi_\M : \M \to \M$ such that the cokernel of the linearization $1\otimes \varphi: \huaS\otimes_{\varphi, \huaS}\M \to \M$ is killed by $E(u)^r$. Morphisms in $\sfi$ are $\varphi$-compatible $\gs$-module homomorphisms. We call objects in $\sfi$ (finite free) \emph{Kisin modules of $E(u)$-height $r$}.

We call $\huaM_2$ a quotient of $\huaM$ in the category $\sfi$ if there is a short exact sequence $0 \to \huaM_1 \to \huaM \to \huaM_2 \to 0$ in the category, where short exact means short exact as $\huaS$-modules.

For any finite free Kisin module $\M \in \t{Mod}_{\gs} ^{\varphi, r}$, we define
$$T_\gs (\M) : = \Hom_{\gs, \varphi} (\M , W(R)).$$
$T_\gs (\M)$ is a finite free $\Z_p$-representation of $G_\infty$ and $\rank_{\Z_p} T_\gs(\M) = \rank_\gs (\M)$.
We only recall the following result of Kisin (see \cite[\S2.2]{Liu07} for more details on $T_\gs$).

\begin{thm}\cite[Lem. 2.1.15, Prop. 2.1.12]{Kis06} \label{ThuaS}
\begin{enumerate}
\item For any $G_{\infty}$-stable $\Zp$-lattice $T$ in a semi-stable Galois representation $V$ with Hodge-Tate weights in the range $[0, r]$, there always exists an $\huaN \in \Mod_{\huaS}^{\varphi}$ such that $T_{\huaS}(\huaN) \simeq T$.
\item $T_{\huaS}: \Mod_{\huaS}^{\varphi} \to \Rep_{\Zp}(G_{\infty})$ is fully faithful.
\end{enumerate}
\end{thm}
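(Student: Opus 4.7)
The plan is to follow Kisin's argument in \cite{Kis06}. The starting point is Fontaine's equivalence between étale $\varphi$-modules over $\O_\E = \widehat{\huaS[\tfrac{1}{u}]}$ (the $p$-adic completion) and continuous $\Zp$-representations of $G_\infty$. Given a $G_\infty$-stable lattice $T$ inside a semi-stable $V$ with Hodge--Tate weights in $[0,r]$, this equivalence produces a finite free étale $(\varphi, \O_\E)$-module $M$. Part (1) is then equivalent to producing a $\varphi$-stable, finite free $\huaS$-submodule $\huaN \subset M$ such that $\O_\E \otimes_\huaS \huaN = M$ and the cokernel of the linearization $\huaS \otimes_{\varphi, \huaS} \huaN \to \huaN$ is killed by $E(u)^r$.

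The construction of $\huaN$ proceeds in two stages. First, to the filtered $(\varphi, N)$-module $D = D_{\st}(V)$ one attaches a $\varphi$-module $\cM(D)$ over the ring $\O$ of rigid analytic functions on the open unit disk over $K_0$, built from $\varphi$, $N$, and the filtration on $D_K$. One shows that $\cM(D)$ has $E(u)$-height $r$ and that, after suitable base change, it recovers $M[\tfrac{1}{p}]$. Second, invoking Kedlaya's slope filtration theorem to control the Newton slopes at the boundary of the disk, one descends $\cM(D)$ to a finite free $\huaS[\tfrac{1}{p}]$-module $\huaN_{K_0}$ of the same height. Finally, setting $\huaN := \huaN_{K_0} \cap M$ inside $M[\tfrac{1}{p}]$, one verifies that this intersection is a finite free $\huaS$-lattice of height $r$ whose $\O_\E$-scalar extension is $M$; the verification is local and uses the regularity of $\huaS$.

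For full faithfulness in (2), suppose $\huaN_1, \huaN_2$ are Kisin modules and write $M_i = \O_\E \otimes_\huaS \huaN_i$. A $G_\infty$-equivariant morphism $T_\huaS(\huaN_1) \to T_\huaS(\huaN_2)$ corresponds via Fontaine's equivalence to a $\varphi$-compatible $\O_\E$-linear map $f \colon M_1 \to M_2$, and we must show $f(\huaN_1) \subset \huaN_2$. Form $\huaN_2' := \huaN_2 + f(\huaN_1) \subset M_2$, a $\varphi$-stable finitely generated $\huaS$-submodule of $M_2$ containing $\huaN_2$; a direct calculation shows that $\huaN_2'$ also satisfies the $E(u)^r$-height condition. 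The crucial uniqueness lemma asserts that inside the fixed étale $(\varphi, \O_\E)$-module $M_2$ there is at most one $\varphi$-stable finite free $\huaS$-lattice of $E(u)$-height $r$ spanning $M_2$ over $\O_\E$; applied to $\huaN_2$ and $\huaN_2'$, this forces $\huaN_2' = \huaN_2$ and hence $f(\huaN_1) \subset \huaN_2$.

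The principal obstacle, by a wide margin, is the existence in (1)---specifically the descent from the analytic module $\cM(D)$ over $\O$ to a finite free Kisin module over $\huaS[\tfrac{1}{p}]$. This is the deep analytic input of \cite[\S 1--2]{Kis06} and relies critically on Kedlaya's classification of $\varphi$-modules over the Robba ring by their Newton slopes, together with a careful analysis of the behavior of $\cM(D)$ at the zeros of $\varphi^n(E(u))$. By comparison, the uniqueness lemma underlying full faithfulness is more elementary once one has appropriate tools to manipulate height-$r$ $\huaS$-submodules of étale $\varphi$-modules: the key point is that the intersection (resp.\ sum) of two such lattices inherits the height-$r$ condition, which follows from the defining divisibility by $E(u)^r$.
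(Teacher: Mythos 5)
The paper does not prove this theorem; it states it as a citation of \cite[Lem.\ 2.1.15, Prop.\ 2.1.12]{Kis06} and directs the reader to \cite[\S 2.2]{Liu07} for background on $T_{\gs}$. So there is no in-paper argument to compare against, and I will assess your sketch against Kisin's original proof.

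Your outline of part (1) is essentially correct and follows Kisin's route: one passes through Fontaine's equivalence between \'etale $\varphi$-modules over $\O_{\E}$ and $\Zp[G_\infty]$-modules, attaches to $D = D_{\st}(V)$ a $\varphi$-module $\cM(D)$ over the ring $\O$ of rigid analytic functions on the open unit disk, uses Kedlaya's slope filtration theorem to control the slope-zero structure and descend to a finite free $\huaS[\tfrac{1}{p}]$-module, and finally intersects with the integral \'etale $\varphi$-module $M$ of the lattice $T$. This is indeed where the real analytic depth lies, and you identify it correctly. Your outline of part (2) captures the right idea as well: reduce to a uniqueness statement for height-$\le r$ $\huaS$-lattices inside a fixed \'etale $\varphi$-module.

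One gap worth flagging in your part (2): the submodule $\huaN_2' = \huaN_2 + f(\huaN_1)$ is a priori only a finitely generated, $\varphi$-stable, $u$-torsion-free $\huaS$-submodule of $M_2$; it is not immediately finite free over $\huaS$, so the ``at most one finite free $\huaS$-lattice of height $\le r$'' lemma cannot be applied to it directly. To close this you need the structural input (this is \cite[Prop.\ 2.1.9]{Kis06}) that a finitely generated, $u$-torsion-free $\huaS$-submodule of an \'etale $(\varphi,\O_\E)$-module which spans over $\O_\E$, is $\varphi$-stable, and has $E(u)$-height $\le r$ is automatically finite free over $\huaS$; the proof uses that $\huaS$ is a $2$-dimensional regular local ring (so reflexive modules are free) together with the height condition. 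Once that is in place, your argument---that $\huaN_2'$ inherits the height-$\le r$ condition (this part is fine; it follows from additivity of the defining divisibility) and hence coincides with $\huaN_2$ by uniqueness---goes through, giving $f(\huaN_1) \subset \huaN_2$ and full faithfulness.
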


\begin{defn}
Let $\huaM \in \sfi$,
\begin{enumerate}
 \item $\huaM$ is called \'{e}tale (resp. multiplicative) if $1\otimes \varphi(\huaS\otimes_{\varphi, \huaS}\M)$ is equal to $E(u)^{r}\huaM$ (resp. $\huaM$).
\item $\huaM$ is called nilpotent if it has no nonzero multiplicative submodules, it is called unipotent if it has no nonzero \'{e}tale quotients.
\end{enumerate}
\end{defn}

\begin{lemma}
For $\huaM \in \sfi$, pick any basis $\mathfrak e=(\mathfrak e_1, \ldots, \mathfrak e_d)$, and let $A \in \Mat_d(\huaS)$ be the matrix of $\varphi$ with respect to $\mathfrak e$. Then there exists $B \in \Mat_d(\huaS)$ such that $AB=E(u)^rI_d$. $\huaM$ is unipotent if and only if $\Pi_{n=0}^{\infty} \varphi^n(B)=0$.
\end{lemma}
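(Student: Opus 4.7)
The plan is to mirror the proof of Lemma \ref{lemFLunip} step by step, treating Cartier duality of Kisin modules as the key tool.

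First, for the existence of $B$: the height-$r$ condition means $E(u)^r \M \subseteq (1 \otimes \varphi)(\huaS \otimes_{\varphi, \huaS} \M)$, and in the basis $\mathfrak{e}$ this image is the $\huaS$-column span of $A$. So each $E(u)^r \mathfrak{e}_j$ is a $\huaS$-combination of the columns of $A$; assembling the coefficient vectors into the columns of a matrix $B$ yields $AB = E(u)^r I_d$ (and inverting over $\Frac \huaS$ also gives $BA = E(u)^r I_d$).

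Next, I would establish the parallel nilpotency statement: $\M$ is nilpotent if and only if $\Pi_n \varphi^n(A) = 0$. Following the FL argument, the key lemma is that $\bigcap_{n \geq 0} \huaS \cdot \varphi^n(\M) \subseteq \M$ is the maximal multiplicative subobject of $\M$ in $\sfi$, the analogue of \cite[Lem. 2.3.2]{Gao13}; the content is that multiplicativity is precisely the surjectivity of $1 \otimes \varphi$, so this intersection selects the largest $\varphi$-stable piece on which iterated Frobenius remains surjective. In the basis $\mathfrak{e}$, the $n$-th iterate of Frobenius has matrix $A \varphi(A) \cdots \varphi^{n-1}(A)$, so the intersection vanishes precisely when this matrix product tends to $0$ in the $(p,u)$-adic topology on $\Mat_d(\huaS)$.

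For the unipotency statement, pass to the Cartier dual. Kisin modules in $\sfi$ admit a Cartier dual $\M^\vee := \Hom_{\huaS}(\M, \huaS)$, with Frobenius normalised by the $E(u)^r$-twist that makes the evaluation pairing $\varphi$-equivariant; a direct computation in the dual basis, using the transposed identity $B^T A^T = E(u)^r I_d$, shows that $\M^\vee \in \sfi$ has Frobenius matrix $B^T$. This duality interchanges sub-objects with quotients and swaps multiplicative with \'{e}tale (since $B^T$ is invertible iff $B$ is, iff $A = E(u)^r B^{-1}$, which is the \'{e}tale condition on $\M$). Hence $\M$ is unipotent iff $\M^\vee$ is nilpotent, and by the previous step this is $\Pi_n \varphi^n(B^T) = 0$, equivalently $\Pi_n \varphi^n(B) = 0$.

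The main obstacle is the nilpotency characterisation, namely identifying $\bigcap_n \huaS \cdot \varphi^n(\M)$ as the maximal multiplicative subobject. The FL version \cite[Lem. 2.3.2]{Gao13} works over the DVR $W(k)$; over the two-dimensional base $\huaS$ one has to be careful about the right notion of convergence (the $(p,u)$-adic topology on $\Mat_d(\huaS)$) and about the saturation and $\varphi$-stability of the intersection. Once this lemma is in place, the existence of $B$ and the Cartier duality passage are essentially formal.
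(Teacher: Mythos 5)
The paper's own proof of this lemma is a one-line citation to \cite[Prop.\ 2.1.3]{Gao13}, so you are essentially trying to reconstruct that argument. Your blueprint is the natural one and almost certainly matches the cited proof in outline: existence of $B$ from the height condition, then a nilpotency criterion, then Cartier duality. The existence of $B$ is correct (and the remark that $BA=E(u)^r I_d$ also holds, since $\det A \neq 0$). The Cartier-duality computation giving Frobenius matrix $B^T$ for $\M^\vee$ is also right: requiring $A^T C = E(u)^r I_d$ for $\varphi$-equivariance of the pairing forces $C = B^T$, and $B^T \in \GL_d(\huaS)$ iff $\M$ is \'etale, so duality swaps \'etale and multiplicative and hence unipotent and nilpotent.

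The genuine gap is exactly what you yourself call ``the main obstacle'': the nilpotency criterion $\M$ nilpotent $\iff \prod_{n\geq 0}\varphi^n(A)=0$. You assert that $\huaN := \bigcap_{n}\huaS\cdot\varphi^n(\M)$ is the maximal multiplicative subobject and that its vanishing is equivalent to the product of matrices converging to $0$, but neither claim is formal over $\huaS$, and you do not prove either. Concretely: (a) it is not automatic that $\huaN$ is a sub-Kisin-module in the sense used in the paper (i.e.\ with $\huaS$-free quotient), since over the $2$-dimensional regular local ring $\huaS$ the intersection need not be saturated, so one must argue that its saturation is $\varphi$-stable and still multiplicative, or argue directly that $\huaN \neq 0$ produces a nonzero multiplicative sub-Kisin-module; (b) even granting that $\huaN$ is the maximal multiplicative submodule, the implication ``$\prod_n\varphi^n(A)\to 0 \Rightarrow \huaN=0$'' is not immediate, because a nonzero $x\in\huaN$ is only expressed as $x = C_n y_n$ with $C_n = A\varphi(A)\cdots\varphi^{n-1}(A)\to 0$ and the coefficient vectors $y_n$ a priori unbounded, so one cannot simply pass to the limit; and (c) the converse ``$\huaN=0 \Rightarrow C_n\to 0$'' also requires an argument using the height bound, not just the vanishing of the intersection. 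Since the unipotency statement is then deduced from nilpotency by duality, the entire lemma rests on this unproved step, so as written the proposal is a correct plan but not a proof.
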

\begin{proof}
See (the proof of) \cite[Prop. 2.1.3]{Gao13}.
\end{proof}

\begin{remark}
Using Kisin modules, \cite{Liu10} developed the theory of $(\varphi, \hat G)$-module; but we do not need it in this paper.
\end{remark}

\subsection{Strongly divisible modules}

Let $'\Mod_{S}^{\varphi}$ be the category whose objects are triples $(\cm, \Fil^r \cm, \varphi_r)$ where
\begin{itemize}
  \item $\bigM$ is a finite free $S$-module,
  \item $\Fil^r \cm \subseteq \cm$ is an $S$-submodule which contains $\Fil^r S \cdot \cm$, $\cM/\Fil^r \cM$ is $p$-torsion free,
  \item $\varphi_r : \Fil^r \cm \to \cm$ is a $\varphi$-semi-linear map such that $\varphi_r(sx)=c^{-r}\varphi_r(s)\varphi_r(E(u)^rx)$ for $s \in \Fil^r S$ and $x \in \cm$ (recall that $c =\frac{\varphi(E(u))}{p}$),
\end{itemize}
The morphisms in the category are $S$-linear maps preserving $\Fil^r$ and commuting with $\varphi_r$.
A sequence $0 \to \cm_1 \to \cm \to \cm_2 \to 0$ in $'\Mod_{S}^{\varphi}$ is called short exact if it is short exact as a sequence of $S$-modules, and
the sequence on filtrations  $0 \to \Fil^r\cm_1 \to \Fil^r\cm \to \Fil^r\cm_2 \to 0$ is also short exact. In this case, we call $\cm_2$ a quotient of $\cm$.

Let $\Mod_{S}^{\varphi}$ be the subcategory of $'\Mod_{S}^{\varphi}$ where
\begin{itemize}
  \item The image $\varphi_r(\Fil^r \cm)$ generates $\cm$ over $S$.
\end{itemize}

For $\cM \in \Mod_{S}^{\varphi}$ which is finite free of rank $d$, let
$$T_{\text{cris}}(\cM) := \Hom_{'\Mod_{S}^{\varphi}} (\cM, A_{\text{cris}}),$$
it is a finite free $\Zp$-representation of $G_{\infty}$ of rank $d$.

\begin{defn} \label{defunip}
For $\cM \in \Mod_{S}^{\varphi}$,
\begin{enumerate}
\item  $\cM$ is called \'{e}tale if $\Fil^r \cM =\cM$, it is called multiplicative if $\Fil^r \cM =\Fil^rS \cM$.
\item $\cM$ is called nilpotent if it has no nonzero multiplicative submodules, it is called unipotent if it has no nonzero \'{e}tale quotients.
\end{enumerate}
\end{defn}

\subsubsection{} \label{phiphir}
By \cite[Rem. 2.2.1]{Gao13}, for $\cM \in \Mod_{S}^{\varphi}$, $\varphi_r$ induces a map $\varphi: \cm \to \cm$ via $\varphi(x):=\frac{\varphi_r(E(u)^r x)}{c^r}$ (so $\varphi_r =\frac{\varphi}{p^r}$). And one can change the triple $(\cM, \Fil^r \cM, \varphi_r)$ in the definition of $\Mod_{S}^{\varphi}$ to a new triple $(\cM, \Fil^r \cM, \varphi)$ where $\varphi: \cM \to \cM$ such that $\varphi(\Fil^r\cM) \subseteq p^r\cM$ and $\varphi(\Fil^r\cM)$ generates $p^r\cM$. That is, $\varphi_r$ and $\varphi$ provide equivalent information. In the following, we will freely use both of them.

\begin{lemma}\label{lemSunip}
Use notations as in \S \ref{phiphir}. Let $\cM \in \Mod_{S}^{\varphi}$, let $\tilde e=(\tilde e_1, \ldots, \tilde e_d)$ be any basis of $\cm$. Let $\hat A \in \Mat_d(S)$ be the matrix of $\varphi$ with respect to $\tilde e$. Then there exists $\hat B \in \Mat_d(S)$ such that $\hat A \hat B =p^r I_d$.
$\cm$ is unipotent if and only if $\Pi_{n=0}^{\infty} \varphi^n(\hat B)=0$.
\end{lemma}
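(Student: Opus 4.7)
Plan: I would establish existence of $\hat B$ directly, then prove the unipotent characterization in two directions --- one via a block-triangular argument, the other by reducing to the preceding Kisin-module lemma.

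For existence: since $\varphi_r(\Fil^r\cM)$ generates $\cM$ over $S$ and $E(u)^r\cM \subseteq \Fil^r\cM$, any $y \in \Fil^r\cM$ written as $y = \sum_i a_i\tilde e_i$ satisfies $\varphi(y) = \sum_i \varphi(a_i)\,\varphi(\tilde e_i)$, so $p^r\cM = S\cdot\varphi(\Fil^r\cM) \subseteq \sum_i S\cdot\varphi(\tilde e_i)$. Collecting coefficients produces $\hat B \in \Mat_d(S)$ with $\hat A\hat B = p^rI_d$.

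For the direction ``not unipotent $\Rightarrow \prod_n \varphi^n(\hat B) \neq 0$'': if $\cM$ admits a non-zero \'etale quotient $\cM_2$ of rank $d_2$, the short exact sequence $0 \to \cM_1 \to \cM \to \cM_2 \to 0$ splits as $S$-modules (since $\cM_2$ is $S$-free), so one may pick a basis of $\cM$ adapted to this decomposition. In that basis $\hat A$ is block upper-triangular with bottom-right block $p^rA_2'$, where $A_2' \in \GL_{d_2}(S)$ because $\cM_2$ being \'etale forces $\varphi_r|_{\cM_2}$ to be a semi-linear automorphism. Hence $\hat B$ is block upper-triangular with bottom-right block $(A_2')^{-1}$, and any iterated product $\prod_n\varphi^n(\hat B)$ has bottom-right block equal to an iterated product of $\GL_{d_2}(S)$ matrices, hence invertible and non-zero. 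Basis-independence (a change of basis $P \in \GL_d(S)$ conjugates the iterated product by invertibles) extends this conclusion to the original basis.

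For the direction ``unipotent $\Rightarrow \prod_n \varphi^n(\hat B) = 0$'': I pass to the Kisin side via Theorem \ref{thm kisbre}, writing $\cM \simeq S \otimes_{\varphi_{\huaS},\huaS}\huaM$ with $\huaM$ unipotent (\'etale quotients correspond under the functor). Fix a basis $\e$ of $\huaM$ with Frobenius matrix $A$ and companion $B$ (so $AB = E(u)^rI_d$). For the induced basis $\tilde\e = 1\otimes\e$ of $\cM$, a direct computation gives $\hat A = \varphi(A)$ and, using $\varphi(E(u)) = pc$, one may take $\hat B = c^{-r}\varphi(B)$. Since $c \in S^\times$ and scalars commute with matrices,
\[
\prod_{n=0}^{N-1}\varphi^n(\hat B) \;=\; u_N\cdot\varphi\Bigl(\prod_{n=0}^{N-1}\varphi^n(B)\Bigr), \qquad u_N \in S^\times.
\]
As $u_N$ is a unit and $\varphi$ is $p$-adically norm-decreasing on $S$ (since $\varphi(p) = p$), the vanishing of $\prod_n\varphi^n(B)$ in $\Mat_d(\huaS)$ (from the preceding Kisin lemma, applied to the unipotent $\huaM$) yields the same for $\prod_n\varphi^n(\hat B)$ in $\Mat_d(S)$. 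The case of an arbitrary basis again follows by basis-independence.

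The main obstacle is the bookkeeping in the Kisin-to-Breuil matrix translation: deriving $\hat B = c^{-r}\varphi(B)$ cleanly, matching \'etale quotients on the two sides to transfer unipotent-ness, and checking that the scalar unit $u_N$ together with the mild $p$-adic behavior of $\varphi$ do not obstruct the vanishing. Once these ingredients are assembled, the lemma follows by combining the block argument with the Kisin-module analogue.
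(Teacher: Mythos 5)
The paper's own ``proof'' of this lemma is a bare citation to \cite[\S 2.3--2.4, Prop.~2.4.5]{Gao13}, so your attempt to give an actual argument is a genuinely different contribution. Several parts are sound: the existence of $\hat B$ is a correct (if slightly loosely worded) consequence of $\varphi_r(\Fil^r\cM)$ generating $\cM$; the block-upper-triangular argument for ``not unipotent $\Rightarrow\prod_n\varphi^n(\hat B)\neq 0$'' is correct, including the observation that $A_2'\in\GL_{d_2}(S)$ because the linearization of $\varphi_r$ on an \'etale $\cM_2$ is an isomorphism between free modules of the same rank; and the matrix translation $\hat A=\varphi(A)$, $\hat B=c^{-r}\varphi(B)$ from the Kisin side is computed correctly, giving $\prod_{n<N}\varphi^n(\hat B)=u_N\,\varphi\bigl(\prod_{n<N}\varphi^n(B)\bigr)$ with $u_N\in S^\times$.

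The serious issue is a \emph{circularity} in the ``unipotent $\Rightarrow 0$'' direction. You invoke Theorem~\ref{thm kisbre}(2) to produce a unipotent Kisin module $\huaM$ with $\cM\simeq\cM_\huaS(\huaM)$ and to transfer unipotence across the functor. But Theorem~\ref{thm kisbre}(2) is \cite[Thm.~2.5.6]{Gao13}, and its proof in \loccit\ rests on precisely the unipotence criteria of \cite[\S 2.3--2.4]{Gao13}, culminating in \cite[Prop.~2.4.5]{Gao13} --- which is the very statement this lemma is ``deduced from.'' So your reduction proves the lemma by appealing to a theorem whose proof already uses the lemma (or at any rate its source result in Gao13). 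For $0\le r\le p-2$ you could instead appeal to Theorem~\ref{thm kisbre}(1) (from \cite{CL09}), which avoids this, but the case $r=p-1$ --- where the unipotence hypothesis actually matters --- is exactly the case where your argument is not independent. A direct proof would instead argue as in \loccit: show that a nonzero limit of the partial products $\prod_{n<N}\varphi^n(\hat B)$ cuts out a nonzero multiplicative sub of the dual, i.e.\ an \'etale quotient of $\cM$, contradicting unipotence. A secondary point worth tightening: the sense in which $\prod_{n=0}^\infty\varphi^n(\hat B)$ converges (and the sense in which an infinite product of matrices in $\GL_{d_2}(S)$ remains invertible) needs to be made precise, since $S$ is not a discrete valuation ring and the relevant topology is the one used in \cite[\S 2.3--2.4]{Gao13}; your appeal to $u_N$ being a unit and $\varphi$ being norm-nonincreasing is the right idea but should be spelled out in the same topology.
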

\begin{proof}
It can be easily deduced from \cite[\S 2.3, \S 2.4]{Gao13}, in particular \cite[Prop. 2.4.5]{Gao13}.
\end{proof}

\subsubsection{} \label{huastoS}
We have an injective map of $W(k)$-algebras $\huaS \hookrightarrow S$ by $u \mapsto u$. Let $\varphi: \huaS \to S$ be the map obtained by composing the injection and the Frobenius on $\huaS$.
We define the functor $\cM_{\huaS}$ from $\Mod_{\huaS}^{\varphi}$ to $\Mod_{S}^{\varphi}$ as follows. Let $\huaM \in \Mod_{\huaS}^{\varphi}$, set $\cM = \cM_{\huaS}(\huaM)=S\otimes_{\varphi,\huaS} \huaM$. We have an $S$-linear map $1 \otimes \varphi: S\otimes_{\varphi,\huaS} \huaM \to S\otimes_{\huaS} \huaM$.  Set
$$\Fil^{r}\cM=\{x\in\cM,(1\otimes\varphi)(x)\in \Fil^{r}S\otimes_{\huaS}\huaM \subseteq S\otimes_{\huaS}\huaM\},$$
and define $\varphi_r: \Fil^r \cM \to \cM$ as the composite:
$$\Fil^r \cM \stackrel{1\otimes \varphi}{\longrightarrow} \Fil^r S \otimes_{\huaS} \huaM \stackrel{\varphi_r\otimes 1}{\longrightarrow} S\otimes_{\varphi, \huaS} \huaM =\cM.$$

\begin{thm} \cite[Thm. 2.3.1]{CL09}, \cite[Thm. 2.5.6]{Gao13}  \label{thm kisbre}
\begin{enumerate}
  \item When $0\leq r \leq p-2$, the functor $\cM_{\huaS}$ induces an equivalence between $\Mod_{\huaS}^{\varphi}$ and $\Mod_{S}^{\varphi}$.
  \item When $0\leq r \leq p-1$, the functor $\cM_{\huaS}$ induces an equivalence between the subcategory of unipotent objects in $\Mod_{\huaS}^{\varphi}$ and the subcategory of unipotent objects in $\Mod_{S}^{\varphi}$.
\end{enumerate}
\end{thm}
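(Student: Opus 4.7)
The plan is to exploit the compatibility of $\cM_\huaS$ with the Galois-representation functors $T_\huaS$ and $T_{\cris}$, reducing the statement to the established equivalences for Kisin modules (Theorem \ref{ThuaS}) and strongly divisible modules (the main results of \cite{Bre02, Liu08, Gao13}, which we assume throughout). I would first verify well-definedness: for $\huaM \in \Mod_\huaS^\varphi$, the triple $(\cM, \Fil^r\cM, \varphi_r)$ with $\cM = S \otimes_{\varphi,\huaS}\huaM$ lies in $\Mod_S^\varphi$. The $E(u)$-height $r$ hypothesis forces $\varphi_r(\Fil^r\cM)$ to generate $\cM$ over $S$, and a short computation with bases (using $E(u)\in\Fil^1 S$ and that $c=\varphi_1(E(u))$ is a unit in $S$) shows $\cM/\Fil^r\cM$ is $p$-torsion free.

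Next, I would construct a natural $G_\infty$-equivariant isomorphism
\[ \eta_\huaM \colon T_{\cris}(\cM_\huaS(\huaM))\big|_{G_\infty} \xrightarrow{\sim} T_\huaS(\huaM) \]
arising from the $\varphi$- and $\Fil$-compatible embedding $S \hookrightarrow A_{\cris}$ noted in the introduction: a morphism $\cM_\huaS(\huaM)\to A_{\cris}$ in $\Mod_S^\varphi$ restricts along $\huaM \to \cM_\huaS(\huaM)$, $x\mapsto 1\otimes x$, to a $\varphi$-equivariant map, whose image is forced into $W(R)\subset A_{\cris}$ by a standard iteration argument using the $E(u)$-height $r$ structure (compare \cite[\S 2.2]{Liu07}). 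Faithfulness of $\cM_\huaS$ is then immediate from faithfulness of $T_\huaS$ (Theorem \ref{ThuaS}(2)). Fullness follows by a diagram chase: a morphism $f\colon \cM_\huaS(\huaM)\to\cM_\huaS(\huaN)$ descends via $T_{\cris}$, restriction to $G_\infty$, and $\eta$ to a map $T_\huaS(\huaN)\to T_\huaS(\huaM)$; this lifts by Theorem \ref{ThuaS}(2) to a unique $g\colon \huaM\to\huaN$, and injectivity of $T_{\cris}$ then forces $\cM_\huaS(g)=f$.

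For essential surjectivity, given $\cM \in \Mod_S^\varphi$, the main theorem of strongly divisible modules says $T:= T_{\cris}(\cM)$ is a $G_\infty$-stable lattice in a semi-stable $G_K$-representation with Hodge-Tate weights in $[0,r]$. Theorem \ref{ThuaS}(1) then produces $\huaN \in \Mod_\huaS^\varphi$ with $T_\huaS(\huaN)\cong T|_{G_\infty}$, and $\eta_\huaN$ upgrades this to an isomorphism $T_{\cris}(\cM_\huaS(\huaN))\cong T$ over $G_\infty$. Since both $\cM_\huaS(\huaN)$ and $\cM$ are recovered from the same $G_K$-representation, full faithfulness of $T_{\cris}$ on $\Mod_S^\varphi$ yields $\cM_\huaS(\huaN)\cong\cM$.

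The main obstacle is part (2), the boundary case $r=p-1$. Here both Theorem \ref{ThuaS}(1) and the strongly divisible main theorem can produce non-unique lifts: a given $G_\infty$-stable lattice may arise from several non-isomorphic Kisin modules because of multiplicative phenomena at the top height, so the clean lifting argument above breaks down. I would address this by showing, via the matrix criterion of Lemma \ref{lemSunip} and the parallel matrix description on the $\huaS$-side (the analogous lemma for Kisin modules), that $\cM_\huaS$ both preserves and reflects unipotency, so that the equivalence of the previous steps descends to the unipotent subcategories. The explicit matching of the matrix pairs $(A,B)$ over $\huaS$ with $(\hat A, \hat B)$ over $S$, made transparent by the base-change relation $\hat A = \varphi(A)$, is the technical heart of the argument, following the approach of \cite{Gao13}.
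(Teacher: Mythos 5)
The paper does not prove this theorem; it is imported verbatim from \cite[Thm.\ 2.3.1]{CL09} and \cite[Thm.\ 2.5.6]{Gao13}, and those proofs are direct module-level constructions (one exhibits a quasi-inverse by locating the Kisin module inside $\cM$ as a suitable $\varphi(\huaS)$-submodule and verifies the filtration compatibilities, which is precisely where the constraint $r\leq p-2$, or unipotence when $r=p-1$, enters). Your proposal instead routes everything through Galois representations, and this is where it runs into trouble.

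The essential-surjectivity step has a genuine gap bordering on circularity. You assert that for $\cM\in\Mod_S^{\varphi}$, ``the main theorem of strongly divisible modules says $T_{\cris}(\cM)$ is a $G_\infty$-stable lattice in a semi-stable $G_K$-representation.'' But Theorem \ref{thm strdivlat} concerns $\Mod_S^{\varphi,N}$, i.e.\ modules equipped with a monodromy operator $N$, and it classifies $G_K$-representations via $T_{\st}$. An object of $\Mod_S^{\varphi}$ carries no $N$ and $T_{\cris}(\cM)$ is a priori only a $G_\infty$-representation; nothing in the paper tells you it extends to a semi-stable $G_K$-representation (which is what you need to invoke Theorem \ref{ThuaS}(1)), nor does the paper give full faithfulness of $T_{\cris}$ on $\Mod_S^{\varphi}$ (which your final step uses). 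The standard way to get either fact is to show every $\cM\in\Mod_S^{\varphi}$ carries some $N$-structure, or equivalently that $\cM_{\huaS}$ is essentially surjective — i.e.\ exactly what you are trying to prove. In \cite{CL09} this is disentangled by proving the equivalence directly without passing through Galois representations; if you want to take the representation-theoretic route you must supply an independent proof that $T_{\cris}(\cM)$ is a lattice in a semi-stable $G_K$-representation and that $T_{\cris}$ is fully faithful on $\Mod_S^{\varphi}$, and neither is available to you as a black box.

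Your diagnosis of the boundary case $r=p-1$ is also off. You say the difficulty is that ``a given $G_\infty$-stable lattice may arise from several non-isomorphic Kisin modules,'' but Theorem \ref{ThuaS}(2) asserts full faithfulness of $T_{\huaS}$ on all of $\Mod_{\huaS}^{\varphi}$, with no unipotence hypothesis, so the Kisin module is unique whenever it exists. The failure at $r=p-1$ occurs on the $S$-side: for a non-unipotent Kisin module of height $p-1$, the tuple $\cM_{\huaS}(\huaM)$ need not lie in $\Mod_S^{\varphi}$, and even when it does the functor need not be full. The unipotence hypothesis is what controls the $S$-side pathology, as Lemmas \ref{lemSunip} and its Kisin-side analogue make precise. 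Your matrix-criterion observation that $\cM_{\huaS}$ preserves and reflects unipotency (with $\hat A=\varphi(A)$) is correct and is indeed part of the argument in \cite{Gao13}, but it is the \emph{last} step, after the equivalence on the ambient categories (or a suitable surrogate) is in hand, not a substitute for the missing essential-surjectivity input.
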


Let $'\Mod_{S}^{\varphi, N}$ be the category whose objects are 4-tuples $(\cm, \Fil^r\cm, \varphi_r, N)$, where
\begin{enumerate}
\item $(\cm, \Fil^r\cm, \varphi_r) \in  '\Mod_{S}^{\varphi}$.
\item $N: \cm \to \cm$ is a $W(k)$-linear map such that $N(sx)=N(s)x+sN(x)$ for all $s \in S, x \in \cm$, $E(u)N(\Fil^r \cm) \subseteq \Fil^r \cm$ and the following
diagram commutes:
$$\xymatrix{\Fil^r \cm \ar[d]^{E(u)N} \ar[r]^{\varphi_r} & \cm \ar[d]^{cN}\\ \Fil^r \cm \ar[r]^{\varphi_r} & \cm} $$
\end{enumerate}
Morphisms in the category are given by $S$-linear maps preserving $\Fil^r$ and commuting with $\varphi_r$ and $N$.
A sequence in $'\Mod_{S}^{\varphi, N}$ is said to be short exact if it is short exact as a sequence in $ \Mod_{S}^{\varphi}$.

Let $\Mod_{S}^{\varphi, N}$ be the subcategory of  $'\Mod_{S}^{\varphi, N}$ where $\cm \in \Mod_{S}^{\varphi}$.
We call objects in this subcategory the strongly divisible modules (also called strongly divisible lattices, or strongly divisible $S$-lattices). $\cm \in \Mod_{S}^{\varphi, N}$ is called \'{e}tale (resp. multiplicative, nilpotent, unipotent) if it is so as a module in $\Mod_{S}^{\varphi}$.

For $\cM \in \Mod_{S}^{\varphi, N}$ of $S$-rank $d$,
define $$T_{\textnormal{st}}(\cM):= \Hom_{'\Mod_{S}^{\varphi, N}}(\cM, \Asthat)$$ as in Section 2.3.1 of \cite{Bre99a} (in particular, see \textit{loc. cit.} for the definition of $\Asthat$); it is a finite free $\Zp$-representation of $G_K$ of rank $d$.

\begin{thm}  \label{thm strdivlat}
\begin{enumerate}
  \item When $0 \leq r \leq p-2$, $T_{\textnormal{st}}$ induces an anti-equivalence between $\Mod_{S}^{\varphi, N}$ and $\Rep_{\Zp}^{\fr, \st, [0, r]}(G_K)$.
   \item When $0 \leq r \leq p-1$, $T_{\textnormal{st}}$ induces an anti-equivalence between the subcategory of unipotent objects in  $\Mod_{S}^{\varphi, N}$ and the subcategory of unipotent objects in $\Rep_{\Zp}^{\fr, \st, [0, r]}(G_K)$.
\end{enumerate}
\end{thm}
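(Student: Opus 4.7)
The plan is to combine Theorem \ref{thm kisbre} (relating Kisin modules and strongly divisible modules without the $N$-structure) with a descent argument that passes from $G_\infty$-stable lattices (controlled by Kisin modules) to $G_K$-stable lattices (controlled by the full datum of $(\varphi, N, \Fil^r)$). The argument essentially follows the strategy of \cite{Bre02, Liu08, Gao13}, split into four steps.

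First, I would verify that $T_{\text{st}}$ lands in the expected category. Given $\cM \in \Mod_S^{\varphi, N}$, Breuil's comparison morphism $\cM \otimes_S \widehat{A_{\text{st}}} \to \widehat{A_{\text{st}}}^d$ coming from a basis of $T_{\text{st}}(\cM)^\vee$ identifies $V := T_{\text{st}}(\cM)[1/p]$ with a semi-stable representation whose associated filtered $(\varphi,N)$-module is recovered from $\cM[1/p]$ via Theorem \ref{bigD}, hence $V$ has Hodge-Tate weights in $[0,r]$.

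Next, for full faithfulness, I would use Theorem \ref{thm kisbre}(1) to write $\cM = \cM_{\huaS}(\M)$ for a (unique) $\M \in \Mod_{\huaS}^{\varphi}$. The compatible embedding $\huaS \hookrightarrow \widehat{A_{\text{st}}}$ produces a canonical isomorphism $T_{\text{st}}(\cM)|_{G_\infty} \cong T_{\huaS}(\M)$. Combined with full faithfulness of $T_{\huaS}$ (Theorem \ref{ThuaS}(2)), full faithfulness of $T_{\text{st}}$ reduces to the assertion that the monodromy $N$ on $\cM$ rigidifies the extension of the $G_\infty$-action on $T_{\huaS}(\M)$ to a $G_K$-action, which is furnished by the $(\varphi, \hat G)$-module formalism.

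Essential surjectivity is the main obstacle. Given $T \in \Rep_{\Zp}^{\fr, \st, [0,r]}(G_K)$, Theorem \ref{ThuaS}(1) produces $\M \in \Mod_{\huaS}^{\varphi}$ with $T_{\huaS}(\M) \cong T|_{G_\infty}$, and I would set $\cM := \cM_{\huaS}(\M) \in \Mod_S^{\varphi}$. The nontrivial step is to equip $\cM$ with a monodromy operator $N$ satisfying the Griffiths transversality $E(u) N(\Fil^r \cM) \subseteq \Fil^r \cM$ together with the commutation $c N \varphi_r = \varphi_r \circ (E(u) N)$. Following Liu, one transports $N$ from $D_{\text{st}}(V) \otimes_{K_0} S_{K_0} \cong \cM[1/p]$, and the crucial technical claim is that this $N$ preserves the $S$-lattice $\cM$; this relies on delicate estimates for the action of $N$ on $\widehat{A_{\text{st}}}$ and is the technical heart of \cite{Liu08}. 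For part (2) (the unipotent extension at $r = p-1$), I would combine Theorem \ref{thm kisbre}(2) with the matrix criterion of Lemma \ref{lemSunip} and its Kisin-module analogue to show that unipotence passes cleanly between Kisin modules, strongly divisible modules, and Galois lattices; this chain of compatibilities is worked out in \cite{Gao13}.
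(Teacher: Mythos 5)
The paper does not actually prove Theorem \ref{thm strdivlat}: its ``proof'' is a one-paragraph citation, stating that part~(1) is established in \cite{Liu08} and part~(2) in \cite{Gao13}, consistent with the abstract's disclaimer that ``we need to assume the main results concerning strongly divisible modules.'' Your proposal, by contrast, attempts to sketch the \emph{contents} of those references. The outline you give is a reasonable reconstruction of the strategy there (reduce to Kisin modules via $\cM_{\huaS}$, compare $T_{\st}|_{G_\infty}$ with $T_{\huaS}$, transport $N$ from the rational side, and then establish an integrality bound), and you are honest that the technical heart is buried in phrases like ``delicate estimates for the action of $N$ on $\widehat{A_{\st}}$ .'' But that means the proposal is a summary of a long argument rather than a proof: the single genuinely hard step --- that the rational monodromy operator preserves the integral lattice $\cM$, and that $\Fil^r\cM$ as produced by $\cM_{\huaS}$ agrees with the one coming from $\cM[1/p]$ --- is precisely what \cite{Liu08} and \cite{Gao13} spend most of their length proving, and you offer no independent route to it.

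A couple of smaller imprecisions worth flagging. For full faithfulness you invoke ``the $(\varphi,\hat G)$-module formalism,'' but this is not really needed for that direction: once you have the $G_\infty$-level map of Kisin modules (from Theorem \ref{ThuaS}(2)) and hence of $\cM$'s, the compatibility with $N$ is automatic because the rational map $\D_1\to\D_2$ in $\bigMF$ obtained from $D_{\st}$ already commutes with $N$, and the integral map is its restriction. Conversely, for essential surjectivity you do \emph{not} get to use Theorem \ref{thm kisbre} as freely as the proposal suggests --- one first has to \emph{prove} that $\cM_{\huaS}(\M)$ admits the additional $N$-structure making it a strongly divisible lattice, and this is exactly the integrality estimate; Theorem \ref{thm kisbre} only gives the $(\varphi,\Fil)$-equivalence. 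In short: nothing you wrote is wrong as an outline, but it is not a proof, and it is not what the paper does --- the paper deliberately takes this theorem as a black box so that it can study the Fontaine--Laffaille side without circularity.
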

\begin{proof}
This is first proposed as a conjecture in \cite[Conj. 2.2.6]{Bre02}. Some partial results were known by work of Breuil and Caruso (see \cite[\S 1]{Gao13} for some historical account). (1) is fully established in \cite{Liu08}, and (2) is established in \cite{Gao13}.
\end{proof}

\subsubsection{} \label{subsub fil cm}
By the results in \cite[\S 2.2]{Bre02}, given $\cm \in \Mod_{S}^{\varphi, N}$, then $\D:=\cm[\frac 1 p]$ is naturally an object in $\bigMF$ with $\varphi_{\cm}=\varphi_{\D}|_{\cm}$ (see \ref{phiphir} for $\varphi_{\cm}$), $N_{\cm}=N_{\D}|_{\cm}$, and $\Fil^r \cm =\cm \cap \Fil^r \D$. In particular, the filtration $\{\Fil^i \D\}_{i=0}^{\infty}$ induces a filtration $\{\Fil^i \cm\}_{i=0}^{\infty}$.

Let $\Mod_{S}^{\varphi, N, \cris}$ be the subcategory of $\Mod_{S}^{\varphi, N}$ where $N(\cM) \subset u\cM$.

\begin{cor} \label{cor strdivlat}
\begin{enumerate}
  \item When $0 \leq r \leq p-2$, $T_{\textnormal{st}}$ induces an anti-equivalence between $\Mod_{S}^{\varphi, N, \cris}$ and $\Rep_{\Zp}^{\fr, \cris, [0, r]}(G_K)$.
   \item When $0 \leq r \leq p-1$, $T_{\textnormal{st}}$ induces an anti-equivalence between the subcategory of unipotent objects in  $\Mod_{S}^{\varphi, N, \cris}$ and the subcategory of unipotent objects in $\Rep_{\Zp}^{\fr, \cris, [0, r]}(G_K)$.
\end{enumerate}
\end{cor}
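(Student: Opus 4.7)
The plan is to deduce the corollary from Theorem \ref{thm strdivlat} by identifying the condition $N(\mathcal{M})\subset u\mathcal{M}$ on the $S$-lattice side with the condition ``crystalline'' on the Galois side. More precisely, given $\mathcal{M} \in \Mod_{S}^{\varphi, N}$, let $T=T_{\textnormal{st}}(\mathcal{M})$, put $V=T[1/p]$, and let $D=D_{\st}(V^\vee) \in \MF$ be the associated filtered $(\varphi,N)$-module. We know $V$ is crystalline if and only if $N_D=0$, so the whole corollary reduces, in both the ``all objects'' and the ``unipotent'' cases of Theorem \ref{thm strdivlat}, to the claim
\[
N_D=0 \iff N_{\mathcal{M}}(\mathcal{M}) \subset u\mathcal{M}.
\]

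To establish this equivalence, I would use the bridge provided by \S \ref{subsub fil cm}: the $S_{K_0}$-module $\mathcal{D}:=\mathcal{M}[1/p]$ is an object of $\bigMF$ with $N_{\mathcal{D}}|_{\mathcal{M}}=N_{\mathcal{M}}$, and via Theorem \ref{bigD} it corresponds to the filtered $(\varphi,N)$-module $D$ described above. Consequently, as recalled in \S \ref{subrational}, one has a canonical identification $\mathcal{D}=S_{K_0}\otimes_{K_0} D$ with
\[
N_{\mathcal{D}} = N_S \otimes \Id + \Id \otimes N_D.
\]
Since $N_S$ is $W(k)$-linear and satisfies $N_S(u)=-u$, a straightforward check (including on the divided powers $\gamma_i(E(u))$, using $N_S(E(u))=-uE'(u)\in uS$) shows $N_S(S)\subset uS$. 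Hence $N_{\mathcal{D}}$ reduces mod $u\mathcal{D}$ to $N_D$ on $\mathcal{D}/u\mathcal{D}=D\otimes_{K_0}K_0=D_{K_0}$.

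Using this, the two implications are immediate. If $N_D=0$, then $N_{\mathcal{D}}(\mathcal{D})=N_S(S)\otimes D\subset uS\otimes D=u\mathcal{D}$; intersecting with $\mathcal{M}$ and using that $\mathcal{M}/u\mathcal{M}$ is $p$-torsion free (so that $u\mathcal{D}\cap \mathcal{M}=u\mathcal{M}$) yields $N_{\mathcal{M}}(\mathcal{M})\subset u\mathcal{M}$. Conversely, if $N_{\mathcal{M}}(\mathcal{M})\subset u\mathcal{M}$, then reducing mod $u$ and inverting $p$ gives $N_D=0$ on $D_{K_0}=\mathcal{D}/u\mathcal{D}$, hence on $D$. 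Combining this equivalence with each of parts (1) and (2) of Theorem \ref{thm strdivlat} gives, respectively, parts (1) and (2) of the corollary. The one ingredient that requires the most care is the book-keeping $u\mathcal{D}\cap\mathcal{M}=u\mathcal{M}$ and the verification that $N_S$ indeed takes all of $S$ (divided powers included) into $uS$; everything else is a formal consequence of Theorem \ref{thm strdivlat}, Theorem \ref{bigD}, and \S \ref{subsub fil cm}.
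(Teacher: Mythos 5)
Your plan tracks the paper's own (very terse) proof: reduce to showing that, for $\cm\in\Mod_S^{\varphi,N}$, the associated representation is crystalline if and only if $N(\cm)\subset u\cm$, using $\D=\cm[1/p]\cong S_{K_0}\otimes_{K_0}D$ and $N_\D=N_S\otimes\Id+\Id\otimes N_D$. The easy direction ($N(\cm)\subset u\cm \Rightarrow N_D=0$) and the observation $N_S(S)\subset uS$ are fine. However, the other direction contains a genuine error: the claim that $\cm/u\cm$ is $p$-torsion free is false, and consequently the identity $u\D\cap\cm=u\cm$ that you deduce from it is also false. Indeed $\cm$ is $S$-free, so the question reduces to whether $S/uS$ has $p$-torsion, and it does: taking $e=1$, one has $\gamma_p(u)=u^p/p!\in S$ with $\gamma_p(u)/u=\tfrac{1}{p}\gamma_{p-1}(u)\notin S$, so $\gamma_p(u)\notin uS$, yet $p\,\gamma_p(u)=u\,\gamma_{p-1}(u)\in uS$. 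Thus the image of $\gamma_p(u)$ is a nonzero $p$-torsion element of $S/uS$, and $uS_{K_0}\cap S\supsetneq uS$. You appear to have conflated this with the condition from the definition of $'\Mod_S^\varphi$, which is that $\cm/\Fil^r\cm$ (not $\cm/u\cm$) is $p$-torsion free.

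As a result, your argument only gives $N(\cm)\subset u\D\cap\cm$, which is strictly weaker than $N(\cm)\subset u\cm$; equivalently, it only shows that the reduction $\bar N$ on $\cm/u\cm$ is killed after inverting $p$, not that it vanishes. Closing this gap requires additional input beyond the linear algebra over $S/uS$ you invoke --- for instance, exploiting the $\varphi$-compatibility $c\,N\circ\varphi_r=\varphi_r\circ(E(u)N)$ to bootstrap $N(\cm)\subset u\D\cap\cm$ up to $N(\cm)\subset u\cm$, or passing through the Kisin-module description of $\cm$. (The paper's own one-line ``that is'' glosses over exactly this point, so your write-up has exposed a real subtlety rather than simply reproduced the intended argument; but as written, the step does not go through.)
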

\begin{proof}
For a strongly divisible lattice $\cm \in \Mod_{S}^{\varphi, N}$, it corresponds to an integral crystalline representations if and only if the induced monodromy operator $N$ on $(\cm/u\cm)[\frac 1 p]$ is zero, that is, if and only if $N(\cm) \subset u\cm$.
\end{proof}

\section{Equivalence of categories}
In this section, we fix $0 \leq r \leq p-1$, and always assume $K=K_0$.
In this section, we establish our main results. We first define two functors between Fontaine-Laffaille modules and strongly divisible modules, and then we show they are quasi-inverse to each other.

\subsection{From FL modules to $S$-modules} \label{subsec FLtoS}
In this subsection, we define the functor $\underline{\cM}_S: \pFLfr \to \mathrm{'Mod}_{S}^{\varphi, N, \mathrm{cris}}$.

Let $M \in \pFLfr$, then let $\cm: =\underline{\cM}_S(M) = S\otimes_{W(k)} M$.
Define
\begin{itemize}
  \item $\varphi_{\cm}:=\varphi_S \otimes \varphi_M$,
  \item $N_{\cm}:=N_S\otimes 1$,
  \item $\Fil^r \cm:=\sum_{i=0}^r \Fil^{r-i}S \otimes_{W(k)}\Fil^i M$,
  \item $\varphi_r:= \sum_{i=0}^r \varphi_{r-i, S} \otimes \varphi_{i, M}$.
\end{itemize}
It is easy to check that it is a well-defined functor.

\begin{lemma} \label{lemFLtoS}
Suppose $M \in \pFLfr$, then $M \in \FLfr$ if and only if $\underline{\cM}_S(M) \in \mathrm{Mod}_{S}^{\varphi, N, \mathrm{cris}}$.
\end{lemma}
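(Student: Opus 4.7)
The plan is to reduce the statement to a Nakayama-type comparison of the two generation conditions. The category $\mathrm{Mod}_S^{\varphi, N, \cris}$ is cut out inside $\mathrm{'Mod}_S^{\varphi, N}$ by two extra conditions: (a) the crystalline condition $N(\cm) \subseteq u\cm$, and (b) the generation condition $S \cdot \varphi_r(\Fil^r \cm) = \cm$. Of these, (a) turns out to be automatic for $\cm = \underline{\cM}_S(M)$, so the lemma reduces to checking that (b) is equivalent to $\sum_{i=0}^r \varphi_i(\Fil^i M) = M$.

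Condition (a) is immediate because $N_\cm = N_S \otimes 1$ and one can verify $N_S(S) \subseteq uS$: as $N_S$ is a $W(k)$-linear derivation with $N_S(u) = -u$, for any $f \in W(k)[u]$ one has $N_S(f) = -uf'(u) \in uS$; in particular $N_S(E(u)) \in uS$, so the divided-power Leibniz rule gives $N_S(\gamma_i(E(u))) = \gamma_{i-1}(E(u)) N_S(E(u)) \in uS$. Continuity extends this to all of $S$.

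For condition (b), I would apply Nakayama. The ring $S$ is local with maximal ideal $\mathfrak{m}_S = (p, u)$ and residue field $k$ (as $E(u) \in (p,u)$, every divided power $\gamma_i(E(u))$ with $i \geq 1$ dies modulo $\mathfrak{m}_S$), so $\cm/\mathfrak{m}_S \cm \simeq M/pM$. The main calculation is to identify the image of $\varphi_r(\Fil^r \cm)$ in $M/pM$ as $\sum_i \overline{\varphi_i(\Fil^i M)}$. For $\supseteq$, given any $m \in \Fil^i M$, the element $\gamma_{r-i}(E(u)) \otimes m$ lies in $\Fil^{r-i}S \otimes \Fil^i M \subseteq \Fil^r \cm$ and satisfies
\[
\varphi_r\bigl(\gamma_{r-i}(E(u)) \otimes m\bigr) = \tfrac{c^{r-i}}{(r-i)!} \otimes \varphi_i(m);
\]
since $c = \varphi(E(u))/p$ is a unit in $S$ and $(r-i)! \in W(k)^\times$ (here the bound $r \leq p-1$ is essential), the coefficient $c^{r-i}/(r-i)!$ reduces to a unit in $k$. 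The reverse inclusion $\subseteq$ is immediate from the formula $\varphi_r = \sum_i \varphi_{r-i,S} \otimes \varphi_{i,M}$: any element of $\varphi_r(\Fil^r \cm)$ reduces modulo $\mathfrak{m}_S$ to a $k$-linear combination of elements in $\overline{\varphi_i(\Fil^i M)}$.

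With this identification, Nakayama for $S$ says $S \cdot \varphi_r(\Fil^r \cm) = \cm$ iff $\sum_i \overline{\varphi_i(\Fil^i M)} = M/pM$, and Nakayama for $W(k)$ converts this to $\sum_i \varphi_i(\Fil^i M) = M$, which is precisely the condition defining $\FLfr$ inside $\pFLfr$. The main technical hurdle is the reduction-mod-$\mathfrak{m}_S$ bookkeeping, namely confirming $S/(p,u) = k$ and that the unit $c$ remains a unit in the residue field; both are essentially a direct computation using the explicit description of $S$.
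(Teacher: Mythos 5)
Your structure is sound and is the natural way to spell out what the paper dismisses as ``Easy'': the crystalline condition $N(\cM)\subset u\cM$ is automatic once one checks $N_S(S)\subset uS$ (correct, via $N_S(u)=-u$ and the divided-power Leibniz rule), and the remaining content is the equivalence of the two generation conditions, which you handle by a two-step Nakayama argument over $S$ and then over $W(k)$. The key identity
\[
\varphi_r\bigl(\gamma_{r-i}(E(u))\otimes m\bigr)=\tfrac{c^{r-i}}{(r-i)!}\otimes\varphi_i(m),
\]
together with $c\in S^\times$ and $(r-i)!\in W(k)^\times$ (using $r\le p-1$), is exactly the heart of the verification, and the reverse inclusion from $\varphi_r=\sum\varphi_{r-i,S}\otimes\varphi_{i,M}$ is also correct.

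One factual slip you should repair: $\mathfrak m_S\neq(p,u)$. Since $e=1$, one has $S/pS\simeq\Gamma_k\langle\bar u\rangle$ (the divided-power polynomial algebra), and $S/(p,u)\simeq k[\gamma_p(\bar u),\gamma_{p^2}(\bar u),\dots]$ modulo the DP relations, which is not $k$. In particular $\gamma_p(E(u))\equiv\gamma_p(u)\not\in(p,u)$; the parenthetical reasoning ``$E(u)\in(p,u)$ implies $\gamma_i(E(u))$ dies mod $\mathfrak m_S$'' fails because dividing $E(u)^i$ by $i!$ can leave the ideal once $p\mid i!$. The correct statement is that $\mathfrak m_S=(p)+\Fil^1 S$, equivalently the kernel of $S\xrightarrow{u\mapsto 0}W(k)\to k$. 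Fortunately your Nakayama step only actually uses that $S$ is local with residue field $k$ (which is true), so $\cM/\mathfrak m_S\cM\simeq k\otimes_{W(k)}M=M/pM$ still holds and the rest of the argument goes through unchanged once this description is corrected.
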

\begin{proof}
Easy.
\end{proof}

\subsubsection{} \label{notfil}
Let $M$ be a finite free $W(k)$-module with a decreasing filtration $\{\Fil^i M\}_{i=0}^{\infty}$ such that $\Fil^{i+1}M$ is a direct summand of $\Fil^i M$ for all $i$, and $\Fil^0 M =M, \Fil^{r+1}M=\{0\}$.
Let $\cm:=S\otimes_{W(k)} M$, and let $N=N_S\otimes 1: \cm \to \cm$.
Let $f_{\pi}$ be the map $S \to \mathcal O_K=W(k)$ where $u \mapsto \pi$. Also use $f_{\pi}$ to denote the map $f_{\pi}\otimes 1: \cm \to M$.
Denote $s: M \to \cM$ the natural map where $s(x)=1\otimes x$. Then we can use composite of $s$ and $f_{\pi}$ to \emph{identify} $M$ and $f_{\pi}(\cm)$. This identification induces a filtration $\Fil^i (f_{\pi}(\cm))$ on $f_{\pi}(\cm)$ (from the filtration $\Fil^i M$).
Now we can define two decreasing filtrations on $\cm$ as follows.
\begin{enumerate}
  \item (The tensor product filtration). For any $n \geq 0$, let $\Fil_{\otimes}^n \cm: = \sum_{i+j=n} \Fil^i S \otimes_{W(k)} \Fil^j M $.
  \item (The canonical induced filtration). Let $\widehat{\Fil^0} \cm=\cm$, and define inductively, for $n \geq 1$,
$$\widehat{\Fil^n} \cm :=\{ x\in \cm | N(x)\in\widehat{\Fil^{n-1}}\cm, f_\pi(x) \in \Fil^{n}(f_{\pi}(\cm)).  \} $$
\end{enumerate}

\begin{lemma} \label{lemfil1}
Use notations as in \S \ref{notfil}. The two filtrations are the same, i.e., $\Fil_{\otimes}^n \cm=\widehat{\Fil^n} \cm, \forall n \geq 0$.
\end{lemma}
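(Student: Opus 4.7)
The plan is to establish both containments by induction on $n$, the case $n = 0$ being trivial. I will exploit that since each $\Fil^{i+1}M$ is a direct summand of $\Fil^i M$ in $W(k)$, the filtration on $M$ splits: there is a decomposition $M = \bigoplus_{i=0}^r M_i$ with $\Fil^j M = \bigoplus_{i \geq j} M_i$, so $\Fil_{\otimes}^n \cm = \bigoplus_i \Fil^{\max(0,\, n-i)} S \otimes_{W(k)} M_i$. I fix a $W(k)$-basis $\{m_j\}$ adapted to this splitting, and write $w_j$ for the weight of $m_j$.

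The forward inclusion $\Fil_{\otimes}^n \cm \subseteq \widehat{\Fil^n}\cm$ should be routine. By linearity it reduces to checking the two defining conditions on an elementary tensor $s \otimes m$ with $s \in \Fil^a S$, $m \in M_b$, $a+b = n$. The $N$-condition holds because $N(s \otimes m) = N(s) \otimes m \in \Fil_{\otimes}^{n-1}\cm \subseteq \widehat{\Fil^{n-1}}\cm$ by the inductive hypothesis (using $N(\Fil^a S) \subseteq \Fil^{a-1}S$). The $f_{\pi}$-condition is clear since $f_{\pi}$ kills $\Fil^1 S$: if $a \geq 1$ then the image is $0$, and if $a = 0$ then $b = n$ and $f_{\pi}(s \otimes m) = s(\pi)\, m \in \Fil^n M$.

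For the reverse inclusion, I expand $x = \sum_j s_j \otimes m_j$. By the inductive hypothesis, $N(x) \in \widehat{\Fil^{n-1}}\cm \subseteq \Fil_{\otimes}^{n-1}\cm$, which forces $N(s_j) \in \Fil^{n-1-w_j}S$ whenever $w_j \leq n-2$; separately, $f_{\pi}(x) \in \Fil^n M$ forces $s_j \in \Fil^1 S$ whenever $w_j < n$. The whole direction thus reduces to the following auxiliary claim, which I expect to be the main obstacle: \emph{for $s \in S$ and $k \geq 2$, if $s \in \Fil^1 S$ and $N(s) \in \Fil^{k-1}S$, then $s \in \Fil^k S$.}

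This auxiliary claim is where the unramified hypothesis $K = K_0$ becomes essential. In that case $E(u) = u - \pi$ with $\pi$ a uniformizer of $W(k)$, so $S$ is $p$-adically topologically free over $W(k)$ on the divided-power basis $\{\gamma_j(E)\}_{j \geq 0}$, with $\Fil^k S$ consisting of the convergent series $\sum_{j \geq k} a_j \gamma_j(E)$ with $a_j \in W(k)$. From $N(E) = -u = -\pi - E$ one computes $N(\gamma_j(E)) = -\pi\gamma_{j-1}(E) - j\gamma_j(E)$ for $j \geq 1$, so expanding $s = \sum_{j \geq 1} a_j\gamma_j(E)$ and collecting coefficients yields
\[
N(s) = -\pi a_1 - \sum_{j \geq 1}\bigl(\pi a_{j+1} + j a_j\bigr)\gamma_j(E).
\]
The condition $N(s) \in \Fil^{k-1}S$ then imposes $\pi a_1 = 0$ and $\pi a_{j+1} = -j a_j$ for $1 \leq j \leq k-2$; since $\pi$ differs from $p$ by a unit and $W(k)$ is $p$-torsion free, this recurrence propagates inductively from $a_1 = 0$ to give $a_1 = a_2 = \cdots = a_{k-1} = 0$, hence $s \in \Fil^k S$ as desired.
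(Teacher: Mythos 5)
Your proof is correct and follows essentially the same route as the paper's: choose a $W(k)$-basis adapted to the filtration on $M$, induct on $n$, use the $f_\pi$-condition on the low-weight coefficients, and reduce the rest to an explicit divided-power computation with the derivation $N$. The paper isolates that last step as Lemma \ref{easylemma} (if $s\in\Fil^i S$ and $N(s)\in\Fil^i S$ then $s\in\Fil^{i+1}S$, applied one degree at a time within the outer induction); your auxiliary claim is just the iterated form of that lemma, proved by the same coefficient recurrence in the basis $\{\gamma_j(E)\}$.
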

\begin{proof}
It is obvious $\Fil_{\otimes}^n \cm \subset \widehat{\Fil^n} \cm, \forall n \geq 0$, so we only need to show that $\Fil_{\otimes}^n \cm \supset \widehat{\Fil^n} \cm$.

Because $\Fil^{k+1}M$ is a direct summand of $\Fil^k M$ for all $k$, there exists $0 \leq r_1 \leq \ldots \leq r_d \leq r$ and a $W(k)$-basis $(e_1, \ldots, e_d)$ of $M$ such that $e_i \in \Fil^{r_i} M-\Fil^{r_i+1}M$. Then we have
$$ \Fil_{\otimes}^n \cm = \{ \sum_{i=1}^d s_{n-r_i} \otimes e_i \text{ where } s_{n-r_i} \in \Fil^{n-r_i}S  \}.$$
(Here we use the convention $\Fil^k S =S$ when $k<0$). Also, any element in $ \Fil_{\otimes}^n \cm$ can be uniquely expressed in the form $ \sum_{i=1}^d s_{n-r_i} \otimes e_i$ as above.

To show $\Fil_{\otimes}^n \cm \supset \widehat{\Fil^n} \cm$, we use induction on $n$. The case $n=0$ is trivial. Suppose it is true for $n$, and now consider the case $n+1$. For any $x \in  \widehat{\Fil^{n+1}} \cm$, since $x \in \widehat{\Fil^{n}} \cm = \Fil_{\otimes}^n \cm$, we can write it as $x= \sum_{i=1}^d s_{n-r_i} \otimes e_i$. So we need to show that we must have $s_{n-r_j} \in \Fil^{n+1-r_j} S, \forall 1\leq j\leq d$.
\begin{itemize}
  \item For $j$ such that $n-r_j <0$, there is nothing to prove, since $\Fil^{n+1-r_j} S = \Fil^{n-r_j} S=S$.
  \item For $j$ such that $n-r_j\geq 1$, note that $N(x)= \sum_{i=1}^d N(s_{n-r_i}) \otimes e_i$ which is contained in $\widehat{\Fil^n} \cm= \Fil_{\otimes}^n \cm$, so we have $N(s_{n-r_i}) \in \Fil^{n-r_i} S, \forall 1\leq i \leq d$, now we can apply Lemma \ref{easylemma} to conclude $s_{n-r_j} \in \Fil^{n+1-r_j} S$.
  \item For $j$ such that $n-r_j =0$ (if such a $j$ exists), use $f_\pi(x) = \sum_{i=1}^d f_\pi(s_{n-r_i})e_i \in \Fil^{n+1}M$ to conclude that $f_\pi(s_{n-r_j})=0$, which means that $s_{n-r_j} \in \Fil^1 S$.
\end{itemize}
\end{proof}

\begin{lemma} \label{easylemma}
Suppose $i \geq 1$. If $s \in \Fil^{i} S$ and $N(s) \in \Fil^{i} S$, then $s \in \Fil^{i+1} S$.
\end{lemma}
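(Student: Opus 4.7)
The plan is to reformulate the claim as the injectivity of an induced map on graded pieces of the filtration. Since $N_S(\Fil^i S) \subseteq \Fil^{i-1} S$ in general, the derivation $N$ descends to a $W(k)$-linear map
\[
\bar N \colon \Fil^i S / \Fil^{i+1} S \longrightarrow \Fil^{i-1} S / \Fil^i S,
\]
and the conclusion of the lemma is exactly the injectivity of this map applied to the class $\bar s$ of $s$: the hypothesis $N(s)\in\Fil^i S$ says $\bar N(\bar s)=0$, and the desired conclusion $s\in\Fil^{i+1}S$ says $\bar s=0$ in the source.

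To establish injectivity, I would invoke the standard description of the graded pieces of the divided power filtration. The piece $\Fil^i S / \Fil^{i+1} S$ is a free $\mathcal O_K$-module of rank one generated by the class of $\gamma_i(E(u))$, where $\mathcal O_K$ is identified with $S/\Fil^1 S$ via the projection $f_\pi \colon u \mapsto \pi$ (whose kernel is the ideal $(E(u))$). Using $N(u)=-u$, the Leibniz rule, and the identity $N(\gamma_j(E(u)))= -u\, E'(u)\, \gamma_{j-1}(E(u))$, a direct computation shows that under these identifications $\bar N$ is multiplication by the scalar $-\pi E'(\pi) \in \mathcal O_K$. Since $E(u)$ is Eisenstein and hence separable, $E'(\pi)\neq 0$, so $-\pi E'(\pi)$ is a nonzero element of the domain $\mathcal O_K$, and multiplication by it is injective. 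In the setting of this section, where $K=K_0$, we may take $\pi=p$ and $E(u)=u-p$, in which case this scalar simplifies to $-p$ and injectivity of multiplication by $-p$ on $W(k)$ is immediate.

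I do not anticipate a serious obstacle here: both the description of the graded pieces and the computation of $\bar N$ are routine. A concrete alternative avoiding the formal identification is to write $s=\sum_{j\geq i} a_j(u)\gamma_j(E(u))$ with $a_j(u)\in W(k)[u]$ of degree less than $e=[K:K_0]$, expand $N(s)$ via Leibniz, and read off the coefficient of $\gamma_{i-1}(E(u))$ modulo $\Fil^iS$; this coefficient is $-u\,E'(u)\, a_i(u)$, and the hypothesis forces its image in $\Fil^{i-1}S/\Fil^iS\cong \mathcal O_K$ to vanish, giving $\pi E'(\pi) a_i(\pi)=0$ and hence $a_i\equiv 0$ by degree, whence $s\in\Fil^{i+1}S$.
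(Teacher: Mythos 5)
Your proof is correct and, in its ``concrete alternative'' form, is exactly what the paper means by writing $s = \sum_{n \geq i} a_n \gamma_n(E(u))$ and ``computing explicitly'': apply the Leibniz rule using $N(\gamma_j(E(u))) = -uE'(u)\gamma_{j-1}(E(u))$ and read off the $\gamma_{i-1}$-coefficient. The graded-pieces reformulation, identifying $\bar N \colon \Fil^i S/\Fil^{i+1}S \to \Fil^{i-1}S/\Fil^{i}S$ with multiplication by $-\pi E'(\pi)$ on $\mathcal O_K = S/\Fil^1 S$, is a clean invariant restatement of the same computation that happens to work for any totally ramified $K/K_0$, though the paper only states and uses the lemma when $K = K_0$ (so $e=1$, $E(u)=u-p$, each $a_n \in W(k)$, and the scalar is just $-p$).
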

\begin{proof}
Write $s = \sum_{n \geq i} a_n\frac{E(u)^n}{n!}$ with each $a_n \in W(k)$ (recall $K=K_0$), and compute explicitly.
\end{proof}

\subsection{From $S$-modules to FL modules}
In this subsection, we define the functor from $\mathrm{Mod}_{S}^{\varphi, N, \mathrm{cris}}$ to $\FLfr$. In order to do so, we need Proposition \ref{prop S section}. In order to prove Proposition \ref{prop S section}, the following theorem from \cite{GLS14} is a key technical input.

\begin{thm}(\cite[Thm. 4.1]{GLS14}) \label{GLS}
Suppose $p>2$. Let $T$ be a $\Zp$-lattice in a crystalline representation of $G_K$ with Hodge-Tate weights $\{0\leq r_1 \leq \ldots \leq r_d \leq p \}$. Let $\huaM$ be the corresponding Kisin module, then there exists a basis $(\mathfrak e)=(\mathfrak e_1, \ldots, \mathfrak e_d)$ of $\huaM$ such that
$$\varphi(\mathfrak e) = (\mathfrak e)X\Lambda Y,$$
with $X, Y \in \GL_d(\huaS)$ and $\Lambda$ is a diagonal matrix with diagonal entries $E(u)^{r_1}, \ldots, E(u)^{r_d}$.
\end{thm}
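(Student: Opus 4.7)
The plan is to combine the associated Breuil module with the rigidity of the Hodge filtration for crystalline representations, then descend the resulting factorization from $S$ back to $\huaS$.

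First, form the Breuil module $\cm := S\otimes_{\varphi,\huaS}\huaM$. By Theorem \ref{thm kisbre} and Corollary \ref{cor strdivlat}, since $V := T[\tfrac{1}{p}]$ is crystalline with Hodge-Tate weights $\{r_i\}$, we have $\cm \in \Mod_{S}^{\varphi, N, \cris}$, and $D := (\cm/u\cm)[\tfrac{1}{p}]$ is canonically identified with $D_\cris(V)$, so its Hodge filtration has jumps precisely at the $r_i$. Since each $\Fil^{r_i+1}D$ is a $W(k)$-direct summand of $\Fil^{r_i}D$ inside the lattice $\cm/u\cm$, I would pick a $W(k)$-basis $(\bar e_i)$ of $\cm/u\cm$ with $\bar e_i \in \Fil^{r_i}D \setminus \Fil^{r_i+1}D$, and lift to an $S$-basis $(\tilde e_1,\ldots,\tilde e_d)$ of $\cm$.

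Second, applying Lemma \ref{lemfil1} with $M := \cm/u\cm$ equipped with this direct-summand filtration identifies $\Fil^r\cm$ with the tensor filtration $\sum_i \Fil^{r-r_i}S\cdot\tilde e_i$. Since $\varphi_r(\Fil^r\cm)$ generates $\cm$ over $S$, the set $\{\varphi_r(E(u)^{r-r_i}\tilde e_i)\}$ spans $\cm$, and unwinding using $\varphi_r(E(u)^j x) = c^j\varphi(x)$ shows that the matrix of $\varphi$ on $(\tilde e_i)$ factors as $\widetilde X\Lambda \widetilde Y$ with $\widetilde X, \widetilde Y \in \GL_d(S)$ and $\Lambda = \diag(E(u)^{r_1}, \ldots, E(u)^{r_d})$.

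Finally, one must descend from $\GL_d(S)$ to $\GL_d(\huaS)$ and exhibit an actual $\huaS$-basis of $\huaM$ realizing this shape. The Kisin module $\huaM$ sits inside $\cm$ via $m\mapsto 1\otimes m$, and by Theorem \ref{ThuaS}(2) it is uniquely determined by $V$ together with its $G_\infty$-action, so the adapted $S$-structure pulls back after a careful Frobenius-compatible change of basis (using that $\varphi^\ast\huaM$ is a $\varphi(\huaS)$-lattice inside $\cm$ saturating to $\cm$). This final integral descent is the main obstacle: over the ring $\huaS[\tfrac{1}{p}]$ (a B\'ezout-like ring where Smith normal form essentially applies) the factorization $X\Lambda Y$ is automatic from the elementary divisor theorem, so the genuine content of the theorem is its integrality at $p$. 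This $p$-integrality uses crystallinity in an essential way through $N(\cm)\subset u\cm$; in the general semi-stable case the best analogous shape would be block-triangular with $E(u)$-entries rather than the clean product $X\Lambda Y$, which is precisely why the statement of Theorem \ref{GLS} is restricted to the crystalline setting.
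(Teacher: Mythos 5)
This theorem is not proved in the paper; it is imported as a black box from \cite[Thm.~4.1]{GLS14}, and the entire strategy of the paper is to leverage that external result as the key technical input for Proposition~\ref{prop S section}.  So there is no internal proof to compare your attempt against.  On its own merits, your sketch has two serious gaps.

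First, the argument is circular.  To invoke Lemma~\ref{lemfil1}, you need an identification $\cm \simeq S\otimes_{W(k)}M$ under which (i) the $N$-action on $\cm$ becomes $N_S\otimes 1$, and (ii) the Breuil filtration $\Fil^n\cm$ becomes the ``hat'' filtration built from $N$ and $f_\pi$.  An arbitrary $S$-basis lift $(\tilde e_i)$ of $(\bar e_i)$ gives an $S$-module identification, but there is no reason it carries $N$ to $N_S\otimes 1$ --- that would amount to $N(\tilde e_i)=0$, which is not automatic from $N(\cm)\subset u\cm$.  Producing a simultaneously $\varphi$- and $N$-equivariant section $s\colon M\to\cm$ is exactly the content of Proposition~\ref{prop S section}, whose proof in this paper uses Theorem~\ref{GLS}; and the assertion that the Hodge filtration on the lattice $\cm/u\cm$ splits into $W(k)$-direct summands is itself deduced in the paper from \cite[Prop.~4.5]{GLS14}, i.e.\ from the same source and essentially the same result.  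You are assuming the conclusion in order to set up the bases to which you then apply the factorization.

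Second, even granting the $S$-level factorization, the descent from $\GL_d(S)$ to $\GL_d(\huaS)$ is waved through as ``a careful Frobenius-compatible change of basis.''  This is precisely where the real content of \cite[Thm.~4.1]{GLS14} lies.  You correctly observe that over $\huaS[\tfrac1p]$ (a PID) the shape $X\Lambda Y$ is automatic from elementary divisors, so the substance is $p$-integrality --- but you give no mechanism for it.  The actual proof in \cite{GLS14} is a substantial inductive argument (working modulo $p$, analyzing adapted bases of $\Fil^r$, and lifting step by step), and it does not reduce to Breuil-module formalism plus a change of basis.  Nothing in the present paper supplies that machinery; the paper deliberately cites it rather than reproving it.

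A smaller point: the claim that the semi-stable analogue ``would be block-triangular'' is an assertion rather than an argument, and it is not what goes wrong in the semi-stable case; what fails in the sketch is simply that $N_D\neq 0$, so the hat filtration of Lemma~\ref{lemfil1} (which uses $N_S\otimes 1$) no longer computes the Breuil filtration, and the whole identification collapses.
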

\begin{remark} \label{rem p=2}
Theorem \ref{GLS} is the only place we need to assume $p>2$. We were notified that in an ongoing work, X. Wang (\cite{Xiyuanwang}) will extend Theorem \ref{GLS} to the case $p=2$. Once it is established, all results in our paper will also work for $p=2$.
\end{remark}

The proof of the following proposition follows a similar strategy as in \cite[Prop2.4.1]{Liufil}.
\begin{prop} \label{prop S section}
Suppose $p>2$.
Let $\cm \in \mathrm{Mod}_{S}^{\varphi, N, \mathrm{cris}}$. Define $M:=\cm/u\cm$ and equip it with the induced $\varphi$-action. Then there exists a unique $\varphi$-equivariant (injective) section $s: M \to \cm$. It induces an isomorphism
$$1\otimes s: S\otimes_{W(k)} M  \simeq \cM.$$
Moreover, $1\otimes s$ is also $N$-equivariant if we equip an $N$-action on $S\otimes_{W(k)} M $ by $N_S\otimes 1$.
\end{prop}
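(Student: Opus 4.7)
\emph{Plan and setup.} Following \cite[Prop.~2.4.1]{Liufil}, I will use Theorem \ref{GLS} to fix a distinguished $S$-basis of $\cm$ via the associated Kisin module, construct the $\varphi$-equivariant section by $u$-adic successive approximation, and finally verify uniqueness, bijectivity of $1\otimes s$, and $N$-equivariance. By Corollary \ref{cor strdivlat} combined with Theorem \ref{ThuaS}, attach to $\cm$ a Kisin module $\huaN$; Theorem \ref{thm kisbre} identifies $\cm$ with $S\otimes_{\varphi,\huaS}\huaN$ compatibly with all structures (restricting to unipotent subcategories when $r=p-1$). Theorem \ref{GLS} then supplies a basis $\e$ of $\huaN$ with $\varphi_\huaN(\e)=\e\cdot X\Lambda Y$, $X,Y\in\GL_d(\huaS)$, $\Lambda=\diag(E(u)^{r_1},\dots,E(u)^{r_d})$. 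The $S$-basis $\tilde\e_i:=1\otimes\e_i$ of $\cm$ has Frobenius matrix $A:=\varphi_S(X\Lambda Y)\in\Mat_d(S)$, reducing modulo $u$ to the Frobenius matrix $F:=A|_{u=0}\in\Mat_d(W(k))$ of $M=\cm/u\cm$ in the basis $\bar{\tilde\e}$.

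\emph{Construction of the section.} I look for a change of basis $\boldm=\tilde\e P$ with $P\in\GL_d(S)$ and $P\equiv I_d\pmod u$ satisfying $\varphi_\cm(\boldm)=\boldm F$, equivalently $A\varphi(P)=PF$; the section is then $s(\bar{\tilde\e}_i):=\boldm_i$. I solve iteratively, starting from $P_0=I_d$. The crucial tool is a pseudo-inverse $V\in\Mat_d(S)$ with $AV=p^r I_d$, built from $\Lambda\cdot\diag(E(u)^{r-r_i})=E(u)^r I_d$ and the invertibility of $X,Y$; it lets one solve each leading-order correction. The $u$-adic order of the error is multiplied by $p$ at every step thanks to $\varphi_S(u)=u^p$, and the $p^r$ denominator from $V$ is absorbed because the factorization $A=\varphi_S(X\Lambda Y)$ already carries matching $p$-divisibility: $\varphi_S(\Lambda)$ has diagonal entries $(pc)^{r_i}$. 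Hence the iteration stays in $\Mat_d(S)$ and converges to the required $P$.

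\emph{Uniqueness, isomorphism, and $N$-equivariance.} Two $\varphi$-equivariant sections differ by a map $M\to u\cm$ satisfying the same iteration equation, whose only solution there is zero. The $S$-linear extension $1\otimes s\colon S\otimes_{W(k)}M\to\cm$ reduces modulo $u$ to $\Id_M$, hence is surjective by Nakayama; matching $S$-ranks forces bijectivity. For $N$-equivariance, the crystalline hypothesis gives $N_\cm(\cm)\subset u\cm$, and on $\cm[\tfrac1p]\in\bigMF$ the standard relation $N\varphi=p\varphi N$ holds; hence $t:=N_\cm\circ s\colon M\to u\cm$ satisfies $t\circ\bar\varphi=p\,\varphi_\cm\circ t$. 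Iterating and using $p^r M\subset\bar\varphi(M)$ (from $FV_0=p^r I_d$ with $V_0:=V|_{u=0}$) yields $p^{n(r-1)}t(\bar m)\in u^{p^n}\cm$ for every $n$; after inverting $p$ and using $\bigcap_n u^n S[\tfrac1p]=0$, this forces $t=0$. Combined with the Leibniz rule for $N_S\otimes 1$, this gives the $N$-equivariance of $1\otimes s$.

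\emph{Main obstacle.} The delicate point is ensuring that the iterative correction stays inside $\Mat_d(S)$ rather than drifting into $\Mat_d(S[\tfrac1p])$: the pseudo-inverse $V$ costs a factor of $p^{-r}$ per step, and only the specific factorization $\varphi_\huaN(\e)=\e X\Lambda Y$ supplied by Theorem \ref{GLS} guarantees that the $p$-divisibilities accumulated from $\varphi_S(u)=u^p$ and $\varphi_S(E(u)^{r_i})=p^{r_i}c^{r_i}$ actually cancel these denominators. Without the GLS shape this balance would break down, which is precisely why the proposition requires the hypothesis $p>2$ through Theorem \ref{GLS}.
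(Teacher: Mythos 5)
Your overall strategy — fix a Kisin-module basis via Theorem \ref{GLS}, transport it to $\cm=S\otimes_{\varphi,\huaS}\huaN$, and solve $A\varphi(P)=PF$ by successive approximation — is the same as the paper's, but you work in the \emph{wrong basis}, and this is not a cosmetic choice. You take $A=\varphi_S(X\Lambda Y)=\varphi(X)\,\varphi(\Lambda)\,\varphi(Y)$ in the basis $\tilde{\e}=1\otimes\e$. The paper first changes basis to $\wt{\mathfrak f}:=\tilde{\e}\,Y^{-1}$, after which the Frobenius matrix is $A=Y\varphi(X)\varphi(\Lambda)$; the whole point (spelled out in Remark \ref{remfunctorial}(1)) is that in this form $\varphi(\Lambda)$ and $f_0(\varphi(\Lambda))^{-1}$ are \emph{adjacent} in $B_0=AA_0^{-1}$, so they collapse to the diagonal unit matrix $\diag\bigl((c/c_0)^{r_i}\bigr)\in\GL_d(S)$. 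In your basis they are separated by $\varphi(Y)\,f_0(\varphi(Y))^{-1}=I_d+u^pW$, and the cross-term $u^p\,\varphi(\Lambda)\,W\,f_0(\varphi(\Lambda))^{-1}$ has $(i,j)$-entry containing $u^p\,p^{\,r_i-r_j}$. Since $r_j-r_i$ can be as large as $p-1$, this produces $u^p/p^{p-1}$, which does \emph{not} lie in $S$ when $p>2$ (one only has $u^p/p\in S$, since $E(u)^p/p!$ has $p$-adic valuation $-1$ after dividing by $(p-1)!$). Your assertion that ``the $p^r$ denominator from $V$ is absorbed because the factorization $A=\varphi_S(X\Lambda Y)$ already carries matching $p$-divisibility'' is therefore unjustified as stated, and the iteration as you set it up need not stay in $\Mat_d(S)$. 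Relatedly, even after the basis change the paper still needs the non-obvious claim $(\ast)$, namely that $B_0=I_d+\frac{u^p}{p}D$ with $D\in\Mat_d(S)$ (and not merely $B_0=I_d+\frac{u^p}{p^r}Y$), proved by comparing the two expansions of $B_0$; your write-up skips this refinement, and without it the estimate $p+\cdots+p^{i+1}-r(i+1)>0$ that drives $p$-adic convergence is unavailable.

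On the positive side: your Nakayama argument for bijectivity of $1\otimes s$ and your uniqueness-by-iteration remark are fine (the paper instead invokes \cite[Prop.~6.2.1.1]{Bre97} for uniqueness of the rational section), and your explicit argument for $N$-equivariance of $1\otimes s$ — showing $t:=N_\cm\circ s$ satisfies $t\circ\bar\varphi=p\,\varphi_\cm\circ t$ and then iterating into $\bigcap_n u^n\cm[\tfrac1p]=0$ — supplies a detail the paper leaves implicit. To repair the proof, insert the change of basis $\wt{\mathfrak f}=\tilde{\e}\,Y^{-1}$ before starting the iteration, and prove the claim $B_0=I_d+\frac{u^p}{p}D$ by writing $E(u)=u+pa$ and comparing $B_0=P(I_d+\frac{u^p}{p}C)f_0(P^{-1})$ with the a priori bound $B_0\in I_d+\frac{u^p}{p^r}\Mat_d(\huaS)$, as in the paper.
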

\begin{proof}
Let $T$ be the $\Zp$-lattice corresponding to $\cm$, and use notations in Theorem \ref{GLS}.
Write $\widetilde{\mathfrak e}: =1\otimes_{\varphi} \mathfrak e$ as the basis for $\cm$ (via the construction in \ref{huastoS}). Then $\varphi(\widetilde{\mathfrak e})= \widetilde{\mathfrak e} \varphi(X\Lambda Y)$. Let $\wt{\mathfrak f}: = \wt{\mathfrak e}Y^{-1}$, then $\varphi(\wt{\mathfrak f}) = \wt{\mathfrak f} Y\varphi(X\Lambda).$
Denote $A=Y\varphi(X\Lambda)$.

Let $s$ be a map sending $\wt{\mathfrak f} (\bmod u)$ to $\wt{\mathfrak f} B$ with $B \in \GL_d(S)$. Then $s$ is a section if and only if
$$Bf_0(A)=A\varphi(B),$$
where $f_0: S \to W(k)$ is the $W(k)$-algebra homomorphism such that $f_0(u)=0$.
Firstly, note that the section $s$ is unique (if it exists), because $\varphi$-equivariant section from $D=M[1/p]$ to $\D=\cm [1/p]$ is unique by \cite[Prop. 6.2.1.1]{Bre97}.
Now, we only need to construct some solution $B \in \GL_d(S)$.


Write $f_0(A)=A_0$.
For all $n$, let
$$B_n = A\varphi(A)\cdots \varphi^n(A) \varphi^n(A_0^{-1})\cdots \varphi(A_0^{-1})A_0^{-1},$$
which is equal to $B_0 + \sum_{i=0}^{n-1}(B_{i+1}-B_i)$.
It suffices to show that $B_n  \in \GL_d(S)$, and $p$-adically converges to some matrix in $\GL_d(S)$.
We have
$$B_0 =AA_0^{-1} = Y\varphi(X)\varphi(\Lambda) f_0(\varphi(\Lambda^{-1})) f_0(\varphi(X^{-1})) f_0(Y^{-1}) .$$
The \emph{key thing} here is that $\varphi(\Lambda) f_0(\varphi(\Lambda^{-1})) \in \GL_d(S)$ because $\frac{\varphi(E(u))}{f_0(\varphi(E(u)))} \in S^{\times}$! So $B_0 \in \GL_d(S)$.
It now suffices to show that $B_{i+1}-B_i$ is in $\Mat_d(pS)$ for all $i \geq 0$ and goes to 0 $p$-adically.

By the final paragraph in the proof of \cite[Prop2.4.1]{Liufil}, we have that $A_0^{-1} \in \Mat_d(\frac{W(k)}{p^r})$ (recall that $[0, r]$ is the range of Hodge-Tate weights), and $B_0 =I_d + \frac{u^p}{p^r} Y$ with $Y \in \Mat_d(\huaS)$.
We claim that:
$$(\ast): B_0 =I_d + \frac{u^p}{p} D \text{ with } D \in \Mat_d(S).$$
We will continue with our proof, and prove the claim $(\ast)$ in the end.

Now
$$B_{i+1}-B_i = A\varphi(A)\cdots \varphi^i(A) \varphi^{i+1} (  \frac{u^{p}}{p} D) \varphi^i(A_0^{-1})\cdots \varphi(A_0^{-1})A_0^{-1}. $$
Since $A_0^{-1} \in \Mat_d(\frac{W(k)}{p^r})$, so $\varphi^i(A_0^{-1})\cdots \varphi(A_0^{-1})A_0^{-1}$ is in $\Mat_d(\frac{W(k)}{p^{r(i+1)}})$.
Consider the $p$-power in $\varphi^{i+1} ( \frac{u^{p}}{p} D)$. It is easy to see that
$$\varphi^{i+1} ( \frac{u^{p}}{p} D)   \in \Mat_d (p^{p+\cdots+p^{i+1}}S)$$
Note that $p+\cdots+p^{i+1}- r(i+1) >0, \forall i\geq 0$ (note that $r\leq p-1$), and goes to $\infty$ as $i \to \infty$. This shows that $B_{i+1}-B_i$ is in $\Mat_d(pS)$ for all $i \geq 0$ and goes to 0 $p$-adically. This finishes the proof of our Proposition.

Finally, let us prove the claim $(\ast)$. Write $E(u)=u+pa$ with $a\in W(k)^{\times}$, then
$$ \frac{\varphi(E(u))}{f_0(\varphi(E(u)))} = 1+\frac{u^p}{p}\varphi(a^{-1}) .$$
So $\varphi(\Lambda) f_0(\varphi(\Lambda^{-1})) =I_d +\frac{u^p}{p} C$ for some $C \in \Mat_d(S)$.
So
\begin{eqnarray*}
B_0 &=& P  (I_d +\frac{u^p}{p} C) f_0(P^{-1}) \text{ where } P= Y\varphi(X) \in \GL_d(\huaS) \\
     &=& P f_0(P^{-1})  + \frac{u^p}{p} P C  f_0(P^{-1})\\
   &=& I_d + uQ_1 + \frac{u^p}{p} Q_2 \text{ for } Q_1 \in \Mat_d(\huaS), Q_2 \in \Mat_d(S). \\
\end{eqnarray*}
But $B_0 =I_d + \frac{u^p}{p^r} Y$ with $Y \in \Mat_d(\huaS)$. This implies that $Q_1 \in \Mat_d(u^{p-1}\huaS)$, and so $B_0=I_d + \frac{u^p}{p} D $ with $D \in \Mat_d(S)$.

\end{proof}

\begin{remark}\label{remfunctorial}
\begin{enumerate}
  \item We chose the bases $\wt{\mathfrak f}$ and $\wt{\mathfrak f} (\bmod u)$ so that the matrices $\varphi(\Lambda)$ and $f_0(\varphi(\Lambda^{-1}))$ can \emph{meet each other}. Note that the existence of the section $s$ certainly does not depend on choice of basis. However, if we choose another basis to work, we probably would not be able to prove that the ensuing iteration process gives us a matrix in $\Mat_d(S)$ (although it should!).
  \item The section $s$ is clearly functorial, in the sense that if we have $f:  \cm_1 \to \cm_2$, which induces $f: M_1 \to M_2$, then we have the following commutative diagram:
$$\xymatrix{ M_1 \ar[d]^{f} \ar[r]^{s} & \cm_1 \ar[d]^{f}\\
M_2 \ar[r]^{s} & \cm_2}$$
\end{enumerate}
\end{remark}

\subsubsection{} \label{defnFL}
Now let us define the functor $M_{\mathrm{FL}}: \mathrm{Mod}_{S}^{\varphi, N, \text{cris}} \to \FLfr$ (when $p>2$).
Given $\cm \in \mathrm{Mod}_{S}^{\varphi, N, \text{cris}}$, let $M=M_{\mathrm{FL}}(\cm):= \cm/u\cm$. Equip $M$ with the induced $\varphi$-action.
Now we define a filtration on $M$. By Prop. \ref{prop S section}, we have an $\varphi$-equivariant isomorphism $1\otimes s: S \otimes_{W(k)} M \simeq \cm$, and so we identify $S \otimes_{W(k)} M$ with $\cm$. The filtration $\{\Fil^i \cm\}_{i=0}^\infty$ on $\cm$ (see \ref{subsub fil cm}) induces a filtration on $S \otimes_{W(k)} M$.
Via the surjective map $f_\pi:S \otimes_{W(k)} M \to M$ where $f_\pi(s\otimes m) =s(\pi)m$, we can define $\Fil^i M: =f_{\pi} (\Fil^i ( S \otimes_{W(k)} M )  ).$ Since the sections $s$ are functorial (Remark \ref{remfunctorial}(2)), the definition of $\Fil^i M$ is also functorial.

\begin{lemma}
With notation as above, $\Fil^{i+1}M$ is a direct summand of $\Fil^i M$ for all $i$, and $\Fil^0 M =M, \Fil^{r+1}M=\{0\}$.
\end{lemma}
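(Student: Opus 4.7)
\medskip

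The plan is to show that $\Fil^i M = M \cap \Fil^i D$, where $D = M[1/p]$ is the $K_0$-vector space underlying the filtered $\varphi$-module in $\MF$ corresponding to $\cm$, equipped with its Hodge filtration. Once this identification is in hand, all three assertions follow at once. The crystalline hypothesis $N(\cm) \subseteq u\cm$ will enter by ensuring that the monodromy $N_D$ on $D$ vanishes, and by \cite[Prop.~6.2.1.1]{Bre97} the unique $\varphi$-equivariant section $s$ from Proposition \ref{prop S section}, after inverting $p$, will match the Breuil identification $\D = S_{K_0}\otimes_{K_0} D$ of Theorem \ref{bigD} (which sends $1\otimes x$ to $s(x)$); in particular $f_\pi\colon \cm \to M$ extends to the map $f_\pi\colon \D \to D_K = D$.

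To prove the key equality I would handle the two inclusions separately. For $\subseteq$: since $\Fil^i\cm = \cm \cap \Fil^i\D$ by \S\ref{subsub fil cm}, any element of $\Fil^i M = f_\pi(\Fil^i\cm)$ lies in $f_\pi(\Fil^i\D) \cap M \subseteq \Fil^i D_K \cap M = \Fil^i D \cap M$, the containment $f_\pi(\Fil^i\D) \subseteq \Fil^i D_K$ being immediate from the inductive definition of the filtration on $\D$ in Theorem \ref{bigD}. For $\supseteq$: given $x \in M \cap \Fil^i D$, I would verify that $s(x) = 1\otimes x$ lies in $\Fil^i \D$ by running the inductive definition: the condition $f_\pi(1\otimes x) = x \in \Fil^i D_K$ is obvious, and $N(1\otimes x) = N_S(1)\otimes x + 1\otimes N_D(x) = 0$ since $N_S(1) = 0$ and $N_D = 0$ (this is where crystallinity is used). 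Combining $s(x) \in \Fil^i\D$ with $s(x) \in \cm$ yields $s(x) \in \Fil^i\cm$, and $f_\pi(s(x)) = x$ then places $x$ in $\Fil^i M$.

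The three conclusions then follow formally. Clearly $\Fil^0 M = M \cap D = M$. By Corollary \ref{cor strdivlat}, $\cm$ corresponds to a crystalline representation with Hodge-Tate weights in $[0,r]$, so $\Fil^{r+1} D = 0$ and consequently $\Fil^{r+1} M = 0$. Lastly, the inclusion $\Fil^i M \hookrightarrow \Fil^i D$ induces an injection $\Fil^i M/\Fil^{i+1} M \hookrightarrow \Fil^i D/\Fil^{i+1} D$ into a $K_0$-vector space, so $\Fil^i M/\Fil^{i+1} M$ is a $p$-torsion-free finitely generated $W(k)$-module, hence $W(k)$-free, and therefore $\Fil^{i+1} M$ is a direct summand of $\Fil^i M$. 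The real technical content is all packaged in Proposition \ref{prop S section}; the remaining argument is essentially bookkeeping with the inductive filtration from Theorem \ref{bigD}.
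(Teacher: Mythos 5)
Your proof is correct, but it takes a genuinely different route from the paper. The paper's proof simply cites \cite[Prop.~4.5]{GLS14}, the statement that the Kisin module (hence $\cm$) admits an ``adapted basis'' with respect to which $\Fil^i\cm$ has an explicit shape, and then notes that $f_\pi$ transports such a basis to an adapted basis for $\Fil^i M$, from which all three assertions are read off directly. You avoid \cite[Prop.~4.5]{GLS14} entirely: you instead establish the identity $\Fil^i M = M \cap \Fil^i D$ from the inductive definition of the Breuil filtration on $\D$ and the characterization $\Fil^i\cm = \cm\cap\Fil^i\D$ from \S\ref{subsub fil cm}, and you obtain the direct-summand property via the purely algebraic observation that a finitely generated $p$-torsion-free $W(k)$-module is free. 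Your $\supseteq$ argument correctly isolates where crystallinity enters ($N_D=0$, so $N(1\otimes x)=0$), and you correctly invoke the uniqueness from \cite[Prop.~6.2.1.1]{Bre97} to match the section $s$ of Proposition \ref{prop S section} with Breuil's rational section, so $f_\pi$ on $\cm$ is compatible with $f_\pi$ on $\D$. Worth noting: your argument is close in spirit to the paper's later Lemma \ref{fil3} (which proves a parallel filtration identity on $\cm$), but you have been careful to keep it self-contained — Lemma \ref{fil3} itself relies on Lemma \ref{lemfil1}, whose hypothesis on the filtration of $M$ is precisely the lemma being proved here, so quoting Lemma \ref{fil3} would be circular, and you avoid that. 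What each approach buys: the paper's route is more concrete (it exhibits an explicit adapted basis of $M$, useful for downstream computations), while yours is shorter, more conceptual, and reduces the dependence on the specific structural result \cite[Prop.~4.5]{GLS14}; both, of course, still rest on Theorem \ref{GLS} from \cite{GLS14} via Proposition \ref{prop S section}.
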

\begin{proof}
This is an easy consequence of \cite[Prop. 4.5]{GLS14}. Note that $f_{\pi} (\Fil^i (\cm)  )= f_{\pi} (\Fil^i (\huaM^{\ast})  ) $, where $\huaM^{\ast}=\huaS\otimes_{\varphi, \huaS}\M$ as in the notation of \emph{loc. cit.}.
\end{proof}
\begin{remark}
What the above lemma says is that an ``adapted basis" for $\Fil^i \cm$ gives an ``adapted basis" for $\Fil^i M$. The notion of ``adapted basis" is a key idea in the proof of Theorem \ref{GLS}.
\end{remark}

With $\Fil^i M$ in \ref{defnFL}, we can define two filtrations $\Fil_{\otimes}^n \cm$ and $\widehat{\Fil^n} \cm$ on $\cm$ as in \ref{notfil}.

\begin{lemma} \label{fil3}
With notations above, $\Fil_{\otimes}^n \cm = \widehat{\Fil^n} \cm =\Fil^n \cm, \forall n$.
\end{lemma}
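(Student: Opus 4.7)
Using the preceding lemma, we know that $M$ together with its filtration $\{\Fil^i M\}_{i=0}^\infty$ satisfies the hypothesis of \S\ref{notfil}, so the first equality $\Fil_{\otimes}^n \cm = \widehat{\Fil^n} \cm$ is precisely Lemma \ref{lemfil1}. It remains to prove $\widehat{\Fil^n} \cm = \Fil^n \cm$.

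For this I will use the description from \S\ref{subsub fil cm}: writing $\D := \cm[\tfrac{1}{p}] \in \bigMF$ and $D := M[\tfrac{1}{p}] \in \MF$, we have $\Fil^n \cm = \cm \cap \Fil^n \D$, and by Theorem \ref{bigD} the filtration on $\D$ admits the inductive description
\[
\Fil^{n+1}\D = \bigl\{ x \in \D : N(x) \in \Fil^n \D,\ f_{\pi}(x) \in \Fil^{n+1} D_K \bigr\}.
\]
Since $K = K_0$, we have $D_K = D$. Inverting $p$ in the definition $\Fil^n M = f_{\pi}(\Fil^n \cm)$ and using $\Fil^n \cm[\tfrac{1}{p}] = \Fil^n \D$ (immediate from $\Fil^n \cm = \cm \cap \Fil^n \D$), together with the standard identity $f_{\pi}(\Fil^n \D) = \Fil^n D_K$, yields $\Fil^n M[\tfrac{1}{p}] = \Fil^n D_K$. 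The preceding lemma implies that $\Fil^n M$ is an iterated direct summand, hence in particular saturated in $M$, so I obtain the integral identification
\[
\Fil^n M = \Fil^n D_K \cap M.
\]

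With this in hand, a short induction on $n$ gives $\widehat{\Fil^n}\cm = \Fil^n \cm$: the case $n=0$ is trivial, and for the inductive step, given $x \in \cm$, one has $x \in \Fil^{n+1}\cm$ iff $N(x) \in \cm \cap \Fil^n \D = \Fil^n \cm$ and $f_{\pi}(x) \in \Fil^{n+1} D_K \cap M = \Fil^{n+1}M$; by the induction hypothesis these conditions coincide with the defining conditions for $x \in \widehat{\Fil^{n+1}} \cm$. The only somewhat delicate point is the passage between the rational condition $f_{\pi}(x) \in \Fil^{n+1} D_K$ and the integral condition $f_{\pi}(x) \in \Fil^{n+1} M$, and this is exactly what the saturation of $\Fil^{n+1} M$ in $M$ (coming from the preceding lemma) resolves.
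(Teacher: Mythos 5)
Your proof is correct and follows essentially the same route as the paper: reduce to $\widehat{\Fil^n}\cm = \Fil^n \cm$, then exploit the fact (from \S\ref{subsub fil cm}) that $\Fil^n\cm = \cm\cap\Fil^n\D$ together with the inductive construction of $\Fil^n\D$ from $\Fil^n D_K$. The one place where you add genuine content to the paper's terse ``easily deduced'' is the intermediate step $\Fil^n M = \Fil^n D_K \cap M$, which you justify by first showing $\Fil^n M[\tfrac{1}{p}] = \Fil^n D_K$ (via $(\cm\cap\Fil^n\D)[\tfrac{1}{p}]=\Fil^n\D$ and the surjection $f_\pi(\Fil^n\D)=\Fil^n D_K$) and then invoking saturation of $\Fil^n M$ in $M$, which you correctly extract from the preceding lemma (iterated direct summands). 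This saturation step is exactly the ``delicate point'' you flag, and it is indeed what makes the integral inductive comparison go through; the paper implicitly relies on it but does not spell it out. A minor presentational note: you should also record that $N$ preserves $\cm$ (which holds since $\cm\in\Mod_S^{\varphi,N,\cris}$), as this is used silently when you replace the condition $N(x)\in\Fil^n\D$ by $N(x)\in\cm\cap\Fil^n\D$.
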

\begin{proof}
$\Fil_{\otimes}^n \cm = \widehat{\Fil^n} \cm$ is already proved in Lemma \ref{lemfil1}. The equality $\widehat{\Fil^n} \cm =\Fil^n \cm$ can be easily deduced from the facts recalled in \ref{subsub fil cm}, as we briefly sketch in the following.

Let $\bigD=\cm[\frac 1 p]$, and $D \in \MF$ the corresponding module. By \cite[Prop. 6.2.1.1]{Bre97}, $D \simeq \bigD/u\bigD$ with the isomorphism $(\varphi, N)$-equivariant, and there exists a unique $\varphi$-equivariant section $D \to \bigD$. In our situation, it is precisely the $\Qp$-linear extension of our $s: M \to \cm$ in Prop. \ref{prop S section}. The filtration $\Fil^i \bigD$ is the same as the ``canonical filtration" induced from that of $\Fil^i D_K$ as we recalled in \S \ref{subrational}. And the filtration $\Fil^i D_K$ comes from $f_\pi\circ (1\otimes s)^{-1} : \bigD \simeq S\otimes D \to D_K$, where $f_\pi: S\otimes D \to D_K$ is the map sending $s(u)\otimes d$ to $s(\pi)\otimes d$. This shows that in our situation, we precisely have $\widehat{\Fil^n} \cm =\cm \cap \Fil^n \D =\Fil^n \cm$.
\end{proof}

\begin{prop}
With the above notations, $M=M_{\mathrm{FL}}(\cm)$ is indeed an object in $\FLfr$.
\end{prop}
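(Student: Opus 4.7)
The plan is to verify the axioms of $\FLfr$ for $M = \cm/u\cm$ that have not yet been addressed. The filtration conditions (direct summand, $\Fil^0 M = M$, $\Fil^{r+1}M = 0$) are already taken care of by the preceding lemma, and $M$ is finite free over $W(k) = S/uS$ since $\cm$ is finite free over $S$. So the remaining tasks are: (i) construct the Frobenius maps $\varphi_{i,M} : \Fil^i M \to M$, check that they are $\Frob_{W(k)}$-semi-linear, and verify the compatibility $\varphi_{i,M}|_{\Fil^{i+1}M} = p\varphi_{i+1,M}$; (ii) prove the strong divisibility $\sum_{i=0}^r \varphi_{i,M}(\Fil^i M) = M$.

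For (i), I first lift the construction to the $S$-level. For $0 \leq i \leq r$ and $x \in \Fil^i \cm$, set $\varphi_i(x) := c^{-(r-i)} \varphi_r(E(u)^{r-i} x)$, which is well-defined because $E(u)^{r-i} x \in \Fil^r \cm$ and $c \in S^\times$. Using $\varphi(E(u)) = pc$, one checks that $\varphi_i(x) = \varphi(x)/p^i$ in $\cm[1/p]$, so in particular $\varphi(\Fil^i \cm) \subseteq p^i \cm$ and $\varphi_i|_{\Fil^{i+1}\cm} = p\varphi_{i+1}$. Since the section $s : M \to \cm$ is $W(k)$-linear and $s(\Fil^i M) = 1 \otimes \Fil^i M \subseteq \Fil^i \cm$ (as a piece of the tensor product decomposition in Lemma \ref{fil3}), I can define $\varphi_{i,M}(m) := \varphi_i(s(m)) \bmod u$. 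Semi-linearity over $\Frob_{W(k)}$ follows because $\varphi$ restricted to $W(k) \subset S$ is $\Frob_{W(k)}$, and the compatibility $\varphi_{i,M}|_{\Fil^{i+1}M} = p\varphi_{i+1,M}$ descends from the analogous identity on $\cm$.

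For (ii), which is the main technical obstacle, I will exploit strong divisibility on $\cm$ plus the explicit shape of $\Fil^r \cm$ given by Lemma \ref{fil3}. A preliminary computation, valid because $r - j \leq p - 1$, gives
\[
\varphi_r(t y) = \varphi_{r-j,S}(t)\cdot \varphi_j(y) \qquad \text{for } t \in \Fil^{r-j}S,\ y \in \Fil^j \cm.
\]
Now fix $m \in M$. Since $\varphi_r(\Fil^r\cm)$ generates $\cm$ over $S$, write $s(m) = \sum_k \alpha_k\, \varphi_r(z_k)$ with $\alpha_k \in S$ and $z_k \in \Fil^r \cm$. By Lemma \ref{fil3}, each $z_k$ is a finite sum $z_k = \sum_{j,\ell} t_{k,j,\ell} \otimes m_{k,j,\ell}$ with $t_{k,j,\ell} \in \Fil^{r-j}S$ and $m_{k,j,\ell} \in \Fil^j M$. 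Applying the displayed formula and reducing mod $u$,
\[
m = s(m) \bmod u = \sum_{k,j,\ell} (\alpha_k \bmod u)(\varphi_{r-j,S}(t_{k,j,\ell}) \bmod u)\,\varphi_{j,M}(m_{k,j,\ell}).
\]
The scalars on the right lie in $W(k)$. Using $\Frob_{W(k)}$-semi-linearity of $\varphi_{j,M}$ and surjectivity of $\Frob$ on the perfect field $k$ (hence on $W(k)$), each such term lies in $\varphi_{j,M}(\Fil^j M)$, so $m \in \sum_{j=0}^r \varphi_{j,M}(\Fil^j M)$. The reverse containment is automatic, completing the verification that $M \in \FLfr$.

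The main obstacle is the filtration bookkeeping in step (ii): I need the additive decomposition $\Fil^r\cm = \sum_{j} \Fil^{r-j}S \otimes_{W(k)} \Fil^j M$ (supplied by Lemma \ref{fil3}) and the compatibility of $\varphi_r$ with this decomposition in the form $\varphi_r(ty) = \varphi_{r-j,S}(t)\varphi_j(y)$, which crucially relies on $\varphi(\Fil^{r-j}S) \subseteq p^{r-j}S$ and hence on the hypothesis $r \leq p-1$.
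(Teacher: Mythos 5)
Your argument is correct and amounts to the paper's proof, unpacked. The paper simply identifies $\cm$ with $\underline{\cM}_S(M)$ (via the $(\varphi,N)$-equivariant isomorphism $1\otimes s$ from Proposition \ref{prop S section} together with $\Fil^r\cm=\Fil_\otimes^r\cm$ from Lemma \ref{fil3}) and then invokes Lemma \ref{lemFLtoS}; your explicit construction of the divided Frobenii $\varphi_{i,M}$ via $\varphi_i(x)=c^{-(r-i)}\varphi_r(E(u)^{r-i}x)$, and the strong-divisibility computation in step (ii), are precisely the content that the paper's ``Easy'' proof of Lemma \ref{lemFLtoS} leaves implicit.
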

\begin{proof}
With the identification $1\otimes s: S \otimes_{W(k)} M \simeq \cm$, and the identification $\Fil_{\otimes}^r \cm=\Fil^r \cm$, we can simply apply Lemma \ref{lemFLtoS}.
\end{proof}
This finishes the proof that $M_{\mathrm{FL}}: \mathrm{Mod}_{S}^{\varphi, N, \text{cris}} \to \FLfr$ is a well-defined functor.

\subsection{Equivalence between FL modules and $S$-modules} \label{subsec final}
In this subsection, we show that the functors in the previous two subsections are quasi-inverse to each other, and establish our main results.

\begin{lemma}
The functor $\underline{\cM}_S$ (resp. $M_{\mathrm{FL}}$ when $p>2$) sends unipotent objects to unipotent objects.
\end{lemma}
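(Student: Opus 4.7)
The plan is to exploit the matrix criteria for unipotence in Lemma \ref{lemFLunip} and Lemma \ref{lemSunip}, which have identical shape. In each case I will choose bases so that the matrix of $\varphi$ on source and target coincide, after which the criterion transfers formally through the inclusion $W(k) \hookrightarrow S$ or the reduction $f_0 : S \to W(k)$, $u \mapsto 0$.

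For $\underline{\cM}_S$: suppose $M \in \FLfr$ is unipotent. Fix a $W(k)$-basis $e = (e_1,\ldots,e_d)$ of $M$ and let $F \in \Mat_d(W(k))$ be the matrix of $\varphi_0$, together with $V \in \Mat_d(W(k))$ satisfying $FV = p^r I_d$ and $\prod_{n=0}^{\infty}\varphi^n(V) = 0$ (Lemma \ref{lemFLunip}). Because $\varphi_{\cm} = \varphi_S \otimes \varphi_M$, the tuple $\tilde e := 1 \otimes e$ is an $S$-basis of $\cm := \underline{\cM}_S(M)$ in which the matrix $\hat A$ of $\varphi$ equals $F$, now viewed in $\Mat_d(S)$ via $W(k) \hookrightarrow S$. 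Taking $\hat B := V$ gives $\hat A \hat B = p^r I_d$, and since $V$ has entries in $W(k)$ and $\varphi_S$ restricts to $\varphi_{W(k)}$, the identity $\prod_{n=0}^{\infty}\varphi^n(\hat B) = \prod_{n=0}^{\infty}\varphi^n(V) = 0$ verifies the criterion of Lemma \ref{lemSunip}, so $\cm$ is unipotent.

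For $M_{\mathrm{FL}}$: suppose $\cm$ is unipotent, set $M := \cm/u\cm$, and fix the $\varphi$-equivariant section $s : M \hookrightarrow \cm$ from Proposition \ref{prop S section}. Pick a $W(k)$-basis $e$ of $M$ with matrix $F \in \Mat_d(W(k))$ for $\varphi$ on $M$; then $\tilde e := s(e)$ is an $S$-basis of $\cm$ (because $1 \otimes s$ is an isomorphism), and $\varphi$-equivariance of $s$ yields $\varphi(\tilde e) = s(\varphi(e)) = s(eF) = \tilde e \, F$, so the matrix of $\varphi$ on $\cm$ in this basis is again $F$. Lemma \ref{lemSunip} then provides $\hat B \in \Mat_d(S)$ with $F\hat B = p^r I_d$ and $\prod_{n=0}^{\infty}\varphi^n(\hat B) = 0$. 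Applying the $W(k)$-algebra map $f_0 : S \to W(k)$, which is $\varphi$-equivariant and continuous for the $p$-adic topology, I set $V := f_0(\hat B) \in \Mat_d(W(k))$; then $FV = p^r I_d$ and
$$\prod_{n=0}^{\infty}\varphi^n(V) \;=\; f_0\!\Bigl(\prod_{n=0}^{\infty}\varphi^n(\hat B)\Bigr) \;=\; 0,$$
so $M$ is unipotent by Lemma \ref{lemFLunip}.

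I do not expect a genuine obstacle. The only real content is arranging that the same matrix $F$ represents $\varphi$ both on $M$ and on $\cm$, which is immediate from the definition of $\underline{\cM}_S$ in the first direction and from Proposition \ref{prop S section} in the second; the unipotence product identities then transfer formally along $W(k) \hookrightarrow S$ and $f_0 : S \twoheadrightarrow W(k)$ respectively.
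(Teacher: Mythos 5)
Your proof is correct and follows exactly the route the paper has in mind: the paper's proof is the one-line remark that the statement is an easy consequence of the matrix criteria in Lemma \ref{lemFLunip} and Lemma \ref{lemSunip}, and you have simply spelled out that computation. Two very small observations: in the $\underline{\cM}_S$ direction your $\hat B$ is forced to equal $V$ anyway (since $\hat B = p^r\hat A^{-1}$ is uniquely determined), and in the $M_{\mathrm{FL}}$ direction you could avoid invoking the section $s$ altogether by choosing an arbitrary $S$-basis $\tilde e$ of $\cm$ with matrix $\hat A$ and reducing modulo $u$ via the $\varphi$-equivariant map $f_0$, taking $F=f_0(\hat A)$ and $V=f_0(\hat B)$; this is marginally more economical, but your version is equally valid since the section makes $F$ land in $\Mat_d(W(k))$ from the start.
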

\begin{proof}
This is easy consequence of the criteria in Lemma \ref{lemFLunip} and Lemma \ref{lemSunip}.
\end{proof}

\begin{theorem} \label{thm equiv fl s}
Suppose $p>2$, $K=K_0$.
\begin{enumerate}
  \item When $0 \leq r \leq p-2$, the functor $\underline{\cM}_S$ induces an equivalence between $\FLfr$ and $\mathrm{Mod}_{S}^{\varphi, N, \text{cris}}$.
  \item When $0 \leq r \leq p-1$, the functor $\underline{\cM}_S$ induces an equivalence between the subcategory of unipotent objects in $\FLfr$ and the subcategory of unipotent objects in $\mathrm{Mod}_{S}^{\varphi, N, \text{cris}}$.
\end{enumerate}
\end{theorem}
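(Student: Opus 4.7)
The plan is to show that $\underline{\cM}_S$ and $M_{\mathrm{FL}}$ are mutually quasi-inverse, with part (2) then following immediately from the preceding lemma (preservation of unipotence by both functors) combined with the full argument for (1). Essentially all the nontrivial work has been done in Prop.~\ref{prop S section} (via Theorem~\ref{GLS}) and in Lemmas~\ref{lemfil1} and~\ref{fil3}; what remains is bookkeeping.

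First I would check $M_{\mathrm{FL}}\circ \underline{\cM}_S\simeq \mathrm{Id}_{\FLfr}$. Starting from $M\in\FLfr$ and $\cm=S\otimes_{W(k)}M$, the quotient $\cm/u\cm$ tautologically returns $M$ together with its $\varphi$. By uniqueness of the $\varphi$-equivariant section in Prop.~\ref{prop S section}, the section $s\colon M\to\cm$ is the obvious inclusion $m\mapsto 1\otimes m$. For the filtration, observe that the object $\D=\cm[1/p]\in\bigMF$ produced from $D=M[1/p]\in\MF$ by the construction of \S\ref{subrational} is the same as the one coming from Theorem~\ref{bigD}, so via \S\ref{subsub fil cm} the induced filtration on $\cm$ agrees with the tensor product filtration $\Fil_\otimes^i\cm$. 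Since $f_\pi$ annihilates $\Fil^{\geq 1}S$ (as $E(\pi)=0$), we recover $f_\pi(\Fil^i\cm)=\Fil^i M$.

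Next I would check $\underline{\cM}_S\circ M_{\mathrm{FL}}\simeq \mathrm{Id}$ on $\mathrm{Mod}_{S}^{\varphi,N,\text{cris}}$. Starting from $\cm$ and $M=\cm/u\cm$, Prop.~\ref{prop S section} gives a $\varphi$- and $N$-equivariant isomorphism $1\otimes s\colon S\otimes_{W(k)}M\xrightarrow{\sim}\cm$. To match the $\Fil^r$ structures, Lemma~\ref{fil3} gives $\Fil^n\cm=\widehat{\Fil^n}\cm$, and Lemma~\ref{lemfil1} (applied to $M$ with its filtrations from \S\ref{defnFL}) gives $\widehat{\Fil^n}\cm=\Fil_\otimes^n\cm$ under $1\otimes s$; but $\Fil_\otimes^r$ is precisely the filtration placed on $\underline{\cM}_S(M)$ by definition. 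Hence $1\otimes s$ is an isomorphism of objects of $\mathrm{Mod}_S^{\varphi,N,\text{cris}}$.

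Morphisms cause no trouble: functoriality of the section $s$ (Remark~\ref{remfunctorial}(2)) shows every morphism $\cm_1\to\cm_2$ in $\mathrm{Mod}_S^{\varphi,N,\text{cris}}$ restricts to a filtration-preserving map $M_1\to M_2$ in $\FLfr$, and conversely extension of scalars sends morphisms in $\FLfr$ to morphisms in $\mathrm{Mod}_S^{\varphi,N,\text{cris}}$; these operations are mutually inverse by the natural isomorphisms just constructed. The main obstacle I anticipate is nothing more than keeping the compatible choices of section straight across all the filtration identifications — a diagram-chase once Lemmas~\ref{lemfil1} and~\ref{fil3} are in hand. The distinction between the ranges $r\leq p-2$ and $r\leq p-1$ does not play a role in the quasi-inverse argument itself; it enters only through the hypothesis of Prop.~\ref{prop S section} (which is available in both ranges) and through the unipotent restriction in part (2).
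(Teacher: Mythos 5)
Your proposal is correct and follows essentially the same route as the paper: verify both compositions are naturally isomorphic to the identity using Proposition~\ref{prop S section} for the section and Lemmas~\ref{lemfil1} and~\ref{fil3} to match the filtrations, with unipotence preservation handling part (2). The only (minor) difference is that in the direction $M_{\mathrm{FL}}\circ\underline{\cM}_S\simeq\mathrm{Id}$ you spell out explicitly that the unique $\varphi$-equivariant section must be $m\mapsto 1\otimes m$ and that the $\bigMF$ structure on $\cm[1/p]$ is the one coming from $M[1/p]$, whereas the paper compresses this into a direct invocation of Lemma~\ref{fil3}; the content is the same.
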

\begin{proof}
It suffices to show that $M_{\mathrm{FL}}$ is a quasi-inverse functor in both cases.

Starting from $M \in \FLfr$ (either $0 \leq r \leq p-2$, or $0 \leq r \leq p-1$ with $M$ unipotent), then $\cm:=\underline{\cM}_S(M) = S\otimes_{W(k)}M$. It is easy to see that $M$ and $M_{\mathrm{FL}}(\cm)$ are isomorphic as $W(k)$-modules, with the isomorphism compatible with $\varphi$. The filtration structures are also the same, because
\begin{eqnarray*}
 \Fil^i M_{\mathrm{FL}}(\cm) &=& f_\pi(\Fil^i \cm), \text{ by definition } \\
 &=& f_\pi(\Fil_{\otimes}^i \cm), \text{ by Lemma \ref{fil3} } \\
 &=& \Fil^i M \text{ (obvious)}.
\end{eqnarray*}

Starting from $\cm \in \mathrm{Mod}_{S}^{\varphi, N, \text{cris}}$ (either $0 \leq r \leq p-2$, or $0 \leq r \leq p-1$ with $\cM$ unipotent). Let $M: =M_{\mathrm{FL}}(\cm)$. Then $\cm$ and $\underline{\cM}_S(M)$ are $(\varphi, N)$-equivariantly isomorphic via $1\otimes s$ in Proposition \ref{prop S section}. For the filtration structures, we have
\begin{eqnarray*}
\Fil^i \underline{\cM}_S(M)  &=& \Fil^i_{\otimes} \underline{\cM}_S(M), \text{ by definition } \\
  &=&  \Fil^i \cm ,\text{ by Lemma \ref{fil3}. }
\end{eqnarray*}
\end{proof}

As a corollary of Theorem \ref{thm equiv fl s}, we obtain a new proof for Theorem \ref{thm FL}(1)(b), (2)(b).

\begin{proof}[Reproof of Theorem \ref{thm FL}(1)(b), (2)(b)]
By Theorem \ref{thm equiv fl s} and Corollary \ref{cor strdivlat}, we only need to check that given $M \in \FLfr$, we have
$$ T_{\cris}^\ast(M) = T_{\st}(\underline{\cM}_S(M)).$$
This is a well known fact, and a written proof can be found in \cite[Prop. 3.2.2]{IM15}.
\end{proof}

\begin{remark}
\begin{enumerate}
  \item Comparing with the original proof in \cite{FL82} (we also recommend the nice exposition in \cite{Hattori-Rennes}), one difference in our ``new" proof of Fontaine-Laffaille theory (that is, using Theorem \ref{thm equiv fl s} and Corollary \ref{cor strdivlat}) avoids explicit classification of simple objects (as in \cite[\S 4]{FL82}) and their related explicit calculations. To be more precise, in the work \cite{Bre02, Kis06, Liu08, Gao13}, explicit classification of simple objects were never needed.

  \item     Actually, we can also obtain a ``new" proof for Theorem \ref{thm FL}(1)(a), by using work in \cite{Carusocrelle} (in particular, \cite[Thm. 1.0.4]{Carusocrelle}). We should also be able to ``reprove" Theorem \ref{thm FL}(2)(a), by extending the work of \cite{Carusocrelle} to the $er=p-1$ unipotent case (indeed, just the $e=1, r=p-1$ unipotent case). To get this ``new" proof for Theorem \ref{thm FL}(1)(a), we will then really need to explicitly classify simple objects, which is done in \cite{Carusocrelle}.
      Let us point out that Theorem \ref{thm FL}(1)(a) and Theorem \ref{thm FL}(2)(a) are very useful results. However, their reproof would deviate from the main scope of our current paper, and we do not see anything more interesting in it, so we choose not to carry it out.
\end{enumerate}
\end{remark}

\bibliographystyle{alpha}

\end{document}